\documentclass[a4paper,10pt,leqno,openright,oneside]{amsart}
\linespread{1.2}

\usepackage[latin1,utf8x]{inputenc}
\usepackage{amsthm,amssymb,amsmath,amsopn,amsfonts,pb-diagram,footmisc}
\usepackage{cite}
\usepackage{stmaryrd,calc,enumerate,mathrsfs,amscd,wasysym,footnote}
\usepackage{graphicx}
\usepackage{fancyhdr}
\usepackage{indentfirst}
\usepackage{geometry}
\usepackage{mathtools}
\usepackage{bbm}
\usepackage{float}


\newcommand{\CC}{\mathbb{C}}
\newcommand{\DD}{\mathbb{D}}
\newcommand{\NN}{\mathbb{N}}

\newcommand{\cB}{{\mathcal{B}}}

\renewcommand{\tilde}{\widetilde}
\newcommand{\norm}{\Vert}
\newcommand{\bea}{\begin{align}}
\newcommand{\eea}{\end{align}}
\newcommand{\beqa}{\begin{align*}}
\newcommand{\eeqa}{\end{align*}}

\newcommand{\MT}[3]{T_{#1}^{#2,#3}}


\DeclareMathOperator{\III}{I}
\DeclareMathOperator{\Rl}{Re}


\geometry{verbose, a4paper, lmargin=30mm}

\newtheorem{theorem}{Theorem}[section]

\newtheorem{lem}[theorem]{Lemma}
\newtheorem{prop}[theorem]{Proposition}

\newtheorem{conj}[theorem]{Conjecture}

\title{Generalized Integration Operators on Hardy Spaces}
\author{Nikolaos Chalmoukis}
\thanks{The first author is supported by the fellowship INDAM-DP-COFUND-2015  ''INdAM Doctoral Programme
	in Mathematics and/or Applications
	Cofunded by Marie Sklodowska-Curie Actions'', Grant 713485.}
\date{}
\subjclass[2010]{Primary: 30H10. Secondary: 30H35, 47G10}

\address{N. Chalmoukis \\ Dipartimento di Matematica \\ Universit\'a di Bologna \\ 40127 Bologna, Italy}
\email{nikolaos.chalmoukis2@unibo.it}

\begin{document}
	\maketitle
		\begin{abstract}
		We introduce a natural generalization of a well studied integration operator acting on the family of Hardy spaces in the unit disc. We study the boundedness and compactness properties of the operator and finally we use these results to give simple proofs of a result of R\"atty\"a and another result by Cohn.
	\end{abstract}


	\section{Introduction}
	
	Let $ H^p $ me the Hardy space of analytic function in the unit disc, i.e. functions $ f $ holomorphic in $ \DD $ such that 
	\begin{equation*}
		\norm f \norm_p^p:=\frac{1}{2\pi}\sup_{0<r<1}\int_{0}^{2\pi}|f(re^{i\theta})|^p d\theta < \infty.
	\end{equation*}
	In this article $ p $ will be a positive exponent. There is a variety of linear operators acting between Hardy space which have been studied. In particular in \cite{AlemanSiskakis95}, motivated by some earlier work of Pommerenke in \cite{Pommerenke77} and Berkson and Porta in \cite{BerksonPorta78} Aleman and Siskakis introduced the integral operator $ T_g $, depending on a symbol $ g $ analytic in the unit disc, which generalized the classical Cesaro averaging operator on $ H^2 $, defined by
	\begin{equation*}
		T_gf(z)=\int_{0}^{z}f(t)g'(t)dt.
	\end{equation*}
	The operator has been studied extensively, both for its intrinsic interest and also for its applications. In particular, such an operator seems to have been introduced by Pommerenke in \cite{Pommerenke77}, in order to give a slick proof of the analytic John-Nirenberg inequality. Later on it became also evident the connection to factorization theorems for derivatives of functions in the Hardy space (see \cite{Aleman07}), and therefore to previous work of Aleksandrov and Peller on Foguel-Hankel operators in \cite{AlexandrovPeller96}. More precisely suppose we consider the bilinear operator $ T(f,g):=T_gf, \,\, T:X\times Y \to Z $, where $ X,Y,Z $ are of Banach spaces of analytic functions on the unit disc. Then the question whether or not, for every $ h\in Z $ there exists a factorization of the form $ h'=fg' $, where $ f\in X, g\in X $ and $ \norm f \norm_X + \norm g \norm_Y \lesssim \norm h \norm_Z $, can be translated  into a question about the openness of the bilinear operator $ T $. Weak factorization Theorems can be also approached in this way (see \cite{Aleman07}).
	
	 The first attempt to characterize the boundedness properties of $ T_g $ was in \cite{AlemanSiskakis95} by Aleman and Siskakis where they necessary and sufficient conditions for the boundedness of $ T_g $ on $ H^p $ for $ p\geq 1 $, while in \cite{AlemanCima01}  the result was extended for $ p>0 $. More recently, a characterization of its spectrum was found in \cite{AlemanPelaez12}. 
	
	In the present work we study a generalization of the aforementioned operator. Let us introduce the notation $ \III $ for the integration operator, i.e. 
	\begin{equation*}
	\III f(z):=\int_{0}^{z}f(t)dt.
	\end{equation*} 
	Fix now an analytic symbol $ g $ and a sequence of coefficients $ a\in\CC^{n-1} $. We shall define the generalized integration operator $ T_{g,a} $ by 
	$$ T_{g,a}f(z)= \III^n(fg^{(n)}+a_1f'g^{(n-1)}\cdots a_{n-1}f^{(n-1)}g' ).  $$
	
	The question which naturally arises is, for which symbols $ g $ this operator defines a bounded linear operator between two Hardy spaces. In this direction we prove the following Theorems.
	
	\begin{theorem}\label{p=q}
		Let $ 0<p<\infty $ and $ a\in\mathbb{C}^{n-1} $. Then, $ T_{g,a} $ is bounded from $ H^p $ to itself if and only if $ g\in BMOA $. $ T_{g,a} $ is compact if and only if $ g\in VMOA $.
	\end{theorem}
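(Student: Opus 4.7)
The plan is to reduce boundedness of $T_{g,a}$ on $H^p$ to a Carleson/area-function inequality via the higher-derivative characterization of Hardy spaces. The key observation is that $T_{g,a}f$ vanishes to order $n$ at the origin by construction, and its $n$-th derivative equals
\[
\Phi(z) \;:=\; \sum_{k=0}^{n-1} a_k\, f^{(k)}(z)\, g^{(n-k)}(z), \qquad a_0 := 1.
\]
Hence, by the standard square-function/area-integral description of $H^p$ via higher derivatives,
\[
\|T_{g,a} f\|_{H^p}^p \;\asymp\; \int_{\TT} \Bigl(\int_{\Gamma(\zeta)} |\Phi(z)|^2 (1-|z|)^{2n-2}\, dA(z)\Bigr)^{p/2} d\sigma(\zeta),
\]
where $\Gamma(\zeta)$ is a fixed nontangential cone. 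The theorem then reduces to showing that the right-hand side is controlled by $\|f\|_{H^p}^p$ precisely when $g\in BMOA$, and enjoys the corresponding vanishing condition precisely when $g\in VMOA$.

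For sufficiency, I would assume $g\in BMOA$ and treat each summand of $\Phi$ separately, using the weight factorization
\[
(1-|z|)^{2n-2} = (1-|z|)^{2k-1}\,(1-|z|)^{2(n-k)-1},
\]
which regroups the integrand as
\[
|f^{(k)}|^2 |g^{(n-k)}|^2 (1-|z|)^{2n-2}\, dA \;=\; |f^{(k)}(z)|^2 (1-|z|)^{2k-1}\, d\mu_{n-k}(z),
\]
where $d\mu_m(z) := |g^{(m)}(z)|^2 (1-|z|)^{2m-1}\, dA(z)$. The higher-derivative characterization of $BMOA$ says exactly that each $\mu_m$ ($m\geq 1$) is a Carleson measure when $g\in BMOA$, with norm comparable to $\|g\|_{BMOA}^2$. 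The remaining tent-space estimate—that pairing the Carleson density $\mu_{n-k}$ against the ``fractional Littlewood--Paley density'' $|f^{(k)}|^2(1-|z|)^{2k-1}$ produces a quantity controlled in $L^{p/2}(\TT)$ by $\|f\|_{H^p}^p$—is the natural $n$-derivative analogue of the Aleman--Cima tent-space inequality for $T_g$, provable by the same Coifman--Meyer--Stein duality/Carleson embedding machinery.

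For necessity, the plan is to test boundedness against $H^p$-normalized functions designed to isolate a single summand of $\Phi$ at a prescribed point $w\in\DD$. Taking $f_{w,j}(z) := (z-w)^j k_w(z)$, where $k_w$ is a normalized Cauchy-type kernel at $w$ and $j := \min\{k\geq 0 : a_k\neq 0\}$, one has $f_{w,j}^{(i)}(w)=0$ for $i<j$, so only the terms with $k\geq j$ in $\Phi$ contribute near $w$. Extracting the leading behaviour in $(1-|w|)$ from the assumed inequality $\|T_{g,a}f_{w,j}\|_{H^p}\lesssim\|f_{w,j}\|_{H^p}$ as $w$ ranges over $\DD$ produces the Carleson-measure condition for $\mu_{n-j}$, hence $g\in BMOA$. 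For compactness one runs the same argument with the vanishing-Carleson / little-oh version of the measure condition, combined with the approximation of $VMOA$ by polynomials.

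The main technical obstacle, I expect, is the necessity direction: one has to verify that the cross-terms of $\Phi$ arising from higher-order derivatives of $f_{w,j}$ at $w$ do not conspire to cancel the principal contribution, and that what one extracts is a genuine Carleson-type bound on $\mu_{n-j}$ in the Carleson squares centred at $w$, rather than merely a pointwise inequality on $g^{(n-j)}(w)$. The sufficiency direction, by contrast, is conceptually a routine—though notationally heavy—extension of the classical $n=1$ argument, and should go through once the correct weight decomposition and Carleson characterization of $BMOA$ at derivative level $n-k$ are invoked.
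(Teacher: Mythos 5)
Your proposal has the right skeleton (reduce to the $n$-th derivative and the square-function description of $H^p$, treat the summands $f^{(k)}g^{(n-k)}$ separately), but both halves rest on steps that are either unproved or, in the necessity direction, would not work as stated. The necessity gap is the concrete one: since the $k=0$ coefficient of the operator is $1$ (you set $a_0:=1$), your index $j=\min\{k\ge 0: a_k\neq 0\}$ is always $0$, the factor $(z-w)^j$ is identically $1$, and your test function is just the kernel $k_w$; nothing is isolated and every summand of $\Phi$ contributes at $w$. You correctly identify the possible cancellation between cross-terms as ``the main technical obstacle,'' but the proposal contains no mechanism to rule it out. The paper's mechanism is Lemma \ref{LinearAlg}: for the family $f_{\lambda,\gamma}$ one has $f_{\lambda,\gamma}^{(k)}(\lambda)=(\gamma)_k\bar\lambda^k(1-|\lambda|^2)^{-k-1/p}$, so varying $\gamma$ over $n$ large values produces a Vandermonde-type invertible system and each term $|g^{(n-k)}(\lambda)|(1-|\lambda|^2)^{n-k}$ is bounded separately. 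Note also that this only yields the pointwise condition $g\in\cB$, not the Carleson condition on $\mu_n$ you aim for; the upgrade from $\cB$ to $BMOA$ is then purely algebraic, via $T_g=\MT{g}{n}{0}+\sum_{k=1}^{n-1}\binom{n-1}{k}\MT{g}{n}{k}$ together with Proposition \ref{Bloch} and the known $n=1$ theorem. Extracting a genuine Carleson-box estimate from test functions, as you propose, is strictly harder and is not needed.

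On sufficiency, your plan treats every summand through the Carleson measure $\mu_{n-k}$ and an asserted ``$n$-derivative analogue of the Aleman--Cima tent-space inequality.'' For $k\ge 1$ this is more than is needed: only $g\in\cB$ is required there, and the pointwise bound $|g^{(n-k)}(z)|\lesssim\|g\|_{\cB}(1-|z|)^{k-n}$ collapses the estimate to $G_n(\MT{g}{n}{k}f)\lesssim\|g\|_{\cB}\,G_k(f)$ pointwise, after which Proposition \ref{VariationPalley} finishes with no Carleson embedding at all. For the $k=0$ term your claimed tent-space inequality is true but is exactly the hard content, and it is left as an assertion; the paper never proves it either, instead deducing boundedness of $\MT{g}{n}{0}$ by subtracting the already-controlled $k\ge 1$ terms from the known bounded operator $T_g$. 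So as written the proposal defers the crux of sufficiency to an unproved embedding and the crux of necessity to an unproved non-cancellation claim; to complete it you would need either the Cohn/Coifman--Meyer--Stein tent-space machinery plus a real term-separation argument, or the paper's two algebraic shortcuts.
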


	\begin{theorem}\label{p<q}
		Let $ 0<p<q<\infty $ and $ a\in\mathbb{C}^{n-1} $. Define also $ \alpha=\frac{1}{p}-\frac{1}{q}>0 $ and $ k=\max\{l:a_l\neq 0\} $ or $ k=0 $ if $ a=0 $. Then if $ l<\alpha\leq l+1 \leq n-k $ for some $ l\in\mathbb{N} $, $ T_{g,a} $ is bounded from $ H^p $ to $ H^q $ if and only if $ g^{(l)}\in \Lambda_{a-l} $. If $ \alpha>n-k $ and $ T_{g,a} $ is bounded from $ H^p $ to $ H^q $ then $ T_{g,a} $ is the zero operator.
	\end{theorem}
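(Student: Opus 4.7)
The plan is to establish both directions using the area-function characterization of $H^q$ via the $n$-th derivative, together with pointwise bounds on high derivatives of $g$ coming from the Lipschitz hypothesis, and for the converse to test against a family of normalized reproducing-kernel-type functions.

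For sufficiency, assume $g^{(l)}\in\Lambda_{\alpha-l}$. This hypothesis is equivalent to the pointwise growth estimate $|g^{(m)}(z)|\lesssim(1-|z|)^{\alpha-m}$ for every integer $m>\alpha$. Since $l+1\leq n-k$, for every $0\leq j\leq k$ we have $n-j\geq n-k\geq l+1>\alpha$, so this estimate applies to each of the derivatives $g^{(n-j)}$ appearing in $T_{g,a}f$. Using
\begin{equation*}
\|F\|_{H^q}^q\asymp\int_\TT\left(\int_{\Gamma(\zeta)}|F^{(n)}(z)|^2(1-|z|)^{2n-2}\,dA(z)\right)^{q/2}\,dm(\zeta)
\end{equation*}
applied to $F=T_{g,a}f$ (for which $F^{(n)}=\sum_{j=0}^k a_j f^{(j)}g^{(n-j)}$), the contribution of each term is controlled by
\begin{equation*}
|f^{(j)}(z)|^2|g^{(n-j)}(z)|^2(1-|z|)^{2n-2}\lesssim |f^{(j)}(z)|^2(1-|z|)^{2j-2+2\alpha}.
\end{equation*}
Sufficiency then reduces to the weighted tent-space embedding asserting that multiplication by $(1-|z|)^\alpha$ sends $T^p_2$ continuously into $T^q_2$ for $\alpha=1/p-1/q$, a standard fact obtainable from tent-space theory or by a direct Carleson-measure computation; this yields $\|T_{g,a}f\|_{H^q}\lesssim\|f\|_{H^p}$.

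For necessity in the non-degenerate regime $l<\alpha\leq l+1\leq n-k$, test on the family $f_w(z)=(1-|w|^2)^{1/p}/(1-\bar w z)^{2/p}$, which satisfies $\|f_w\|_{H^p}\asymp 1$, together with appropriate modifications (for example $(z-w)^k$-weighted versions renormalized in $H^p$) designed to kill the lower-order derivatives $f_w^{(j)}(w)$ for $j<k$. Estimating the area function of $(T_{g,a}f_w)^{(n)}$ on the Carleson tent of height and length $\sim 1-|w|$ above $w/|w|$, and isolating the extremal contribution $a_k f_w^{(k)}g^{(n-k)}$, the boundedness hypothesis forces
\begin{equation*}
|g^{(n-k)}(w)|\lesssim (1-|w|)^{(n-k)-\alpha}, \qquad w\in\DD.
\end{equation*}
Under $l<\alpha\leq l+1\leq n-k$ this pointwise estimate is equivalent, by the standard characterization of analytic Lipschitz classes, to $g^{(l)}\in\Lambda_{\alpha-l}$. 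The degenerate case $\alpha>n-k$ is dispatched by the same test-function argument: the resulting pointwise inequality now has a strictly positive exponent, so $|g^{(n-k)}(z)|\to 0$ uniformly as $|z|\to 1$; an analytic function on $\DD$ whose modulus vanishes uniformly at the boundary must vanish identically, so $g$ is a polynomial of degree less than $n-k$, and since $a_j=0$ for $j>k$ while $g^{(n-j)}=0$ for every $j\leq k$, the operator $T_{g,a}$ is identically zero.

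The main technical difficulty is, in the necessity part, isolating the contribution from the extremal index $j=k$ inside $(T_{g,a}f)^{(n)}$, so that the lower-order terms cannot cause cancellations masking the Lipschitz-type estimate for $g^{(n-k)}$. This can be handled either by the choice of test function indicated above, or by first rewriting $T_{g,a}f$ modulo polynomials as a linear combination of the basic operators $\III^m(fg^{(m)})$ for $n-k\leq m\leq n$ obtained from repeated integration by parts; the coefficient of $\III^{n-k}(fg^{(n-k)})$ in this decomposition equals $a_k\neq 0$, so the classical analysis of that simpler operator (essentially the $T_g$ theory of Aleman--Cima) yields the required constraint on $g^{(n-k)}$.
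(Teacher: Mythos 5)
Your proposal is essentially workable but follows a genuinely different route from the paper in both directions, so a comparison is in order. For sufficiency the paper never estimates $(T_{g,a}f)^{(n)}$ term by term against a weighted embedding; instead it proves by induction on $n$ that each block $\MT{g}{n}{k}$ is bounded, using Cohn's factorization $f^{(k)}=FG^{(k)}$ (Theorem \ref{Cohn}, itself a by-product of Theorem \ref{p=q}) together with the integration-by-parts recursion $\MT{g}{n+1}{k}f=\MT{g}{n}{k}f-\MT{g}{n+1}{k+1}f$ and identities such as $\MT{g}{n+1}{k}f=\III^{n-k}\MT{g}{k+1}{1}T_{g^{(n-k)}}F$, so that everything reduces to the $n=1$ Aleman--Cima theorem plus Theorem \ref{p=q}. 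Your direct area-function argument is shorter on its face, but it shifts the entire analytic weight onto two facts you only assert: the embedding $F\mapsto(1-|z|)^{\alpha}F$ from $T^p_2$ into $T^q_2$ with $\alpha=1/p-1/q$, and the comparability of $|f^{(j)}(z)|(1-|z|)^{j}$ with $|f(z)|$ at the level of cone integrals (not merely of $L^p$ norms of area functions, which is what Proposition \ref{VariationPalley} gives in radial form). Both are in the literature, but they are exactly what the paper's induction is designed to avoid proving. For necessity the paper keeps the plain kernels $f_{\lambda,\gamma}$, varies the exponent $\gamma$, and uses the Vandermonde-type Lemma \ref{LinearAlg} to decouple the terms of $\sum_j a_j f_{\lambda,\gamma}^{(j)}(\lambda)g^{(n-j)}(\lambda)$; your alternative of testing with renormalized kernels carrying a factor $(z-w)^k$ annihilates the terms with $j<k$ at $z=w$ outright (and $a_j=0$ for $j>k$), which is a clean and valid substitute for the linear-algebra lemma.

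Two concrete corrections. First, the displayed conclusion of your necessity argument has the exponent reversed: the growth estimate for $H^q$ applied to the $n$-th derivative gives $|g^{(n-k)}(w)|\lesssim(1-|w|)^{\alpha-(n-k)}$, not $(1-|w|)^{(n-k)-\alpha}$; as written, your inequality would force $g^{(n-k)}$ to vanish at the boundary whenever $\alpha<n-k$, which is far stronger than $g^{(l)}\in\Lambda_{\alpha-l}$ and is not what the test functions yield. Your later remark that the exponent becomes strictly positive precisely when $\alpha>n-k$ shows you intended $\alpha-(n-k)$, so this is a slip, but it sits in the key display. Second, at the endpoint $\alpha=l+1=n-k$ your sufficiency argument invokes the pointwise bound for $g^{(m)}$ with $m=n-k=\alpha$, which is not covered by the stated equivalence ``for every integer $m>\alpha$''; this is harmless only under the reading $\Lambda_1=\{h:\ h'\in H^\infty\}$ (Lipschitz class) rather than the Zygmund class, and you should make that choice explicit since the whole endpoint case hinges on it.
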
 
	
	\begin{theorem}\label{p>q}
		Let $ 0<q<p<\infty $ and $ a\in\mathbb{C}^{n-1} $. If $ g\in H^s $, where $ \frac{1}{s}=\frac{1}{q}-\frac{1}{p} $ then $ T_{g,a}  $ is bounded from $ H^p  $ to $ H^q $. In the special case that $ n=2 $ and $ a=0 $, if $ T_{g,a}:H^p\to H^q $ is bounded, then $ g\in H^s $.
	\end{theorem}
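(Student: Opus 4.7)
The plan is to handle the two implications of the theorem separately.

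\emph{Sufficiency.} I would base the argument on the $n$-th order area-function characterization of Hardy spaces: for any integer $n\geq 1$ and any $r>0$,
\beqa
\norm F \norm_{H^r}^r \asymp \sum_{k=0}^{n-1}|F^{(k)}(0)|^r + \int_{0}^{2\pi} A_n(F)(\theta)^r\,d\theta,
\eeqa
where $A_n(F)(\theta)^2 := \int_{\Gamma(\theta)}|F^{(n)}(z)|^2(1-|z|^2)^{2n-2}\,dA(z)$ and $\Gamma(\theta)$ is a Stolz cone at $e^{i\theta}$. Setting $F=T_{g,a}f$, the boundary Taylor terms vanish since $T_{g,a}f$ has a zero of order $n$ at the origin, and $F^{(n)}=\sum_{j=0}^{n-1}a_j f^{(j)}g^{(n-j)}$ with the convention $a_0=1$. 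After $|\sum x_j|^2\leq n\sum|x_j|^2$, it suffices to estimate each term separately.

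For each fixed $j$, I would split the weight as $(1-|z|^2)^{2n-2}=(1-|z|^2)^{2j}\cdot(1-|z|^2)^{2(n-j-1)}$ and apply the pointwise Cauchy estimate $|f^{(j)}(z)|(1-|z|^2)^{j}\lesssim Nf(\theta)$ valid for $z\in\Gamma(\theta)$ (with $Nf$ the nontangential maximal function, after a slight enlargement of the aperture). Pulling $Nf(\theta)^2$ out of the cone integral leaves exactly $A_{n-j}(g)(\theta)^2$ inside. Raising to the $q/2$-th power and integrating in $\theta$ then yields
\beqa
\norm T_{g,a}f \norm_{H^q}^q \lesssim \sum_{j=0}^{n-1}\int_0^{2\pi} Nf(\theta)^q A_{n-j}(g)(\theta)^q\,d\theta.
\eeqa
Because the hypothesis $1/s=1/q-1/p$ is precisely the statement that $p/q$ and $s/q$ are H\"older-conjugate, one application of H\"older in $\theta$, combined with $\norm Nf \norm_{L^p}\asymp\norm f \norm_{H^p}$ and $\norm A_{n-j}(g) \norm_{L^s}\asymp\norm g \norm_{H^s}$, produces the claimed bound $\norm T_{g,a}f \norm_{H^q}\lesssim\norm g \norm_{H^s}\norm f \norm_{H^p}$.

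\emph{Necessity ($n=2$, $a=0$).} In this case $T_g f=\III^2(fg'')$, so $(T_gf)''=fg''$, and the Littlewood--Paley identity (the integer case of the formula above, with $n=2$) gives
\beqa
\norm T_g f \norm_{H^q}^q \asymp \int_\DD |f(z)|^q|g''(z)|^q(1-|z|^2)^{2q-1}\,dA(z).
\eeqa
Boundedness of $T_g:H^p\to H^q$ is therefore equivalent to saying that the measure $d\mu(z):=|g''(z)|^q(1-|z|^2)^{2q-1}dA(z)$ defines a continuous embedding $H^p\hookrightarrow L^q(\mu)$. Since $q<p$, I would invoke Luecking's characterization of such embeddings, which asserts that the embedding holds if and only if a suitable Carleson balayage of $d\mu$ lies in $L^{p/(p-q)}(\mathbb{T})=L^{s/q}(\mathbb{T})$. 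Rewriting that condition in terms of tent integrals over Stolz cones and comparing with the area-function description of the $H^s$-norm (with $n=2$) then produces $g\in H^s$.

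\emph{Main obstacle.} The forward implication is essentially bookkeeping once the area-function framework is available. The harder step is the converse: translating the tent-space condition supplied by Luecking (which sees $|g''|^q$) into the area-function condition characterizing $H^s$ (which sees $|g''|^2$) requires a careful Carleson-type computation comparing two different tent-space norms. It is precisely this translation that fails in the presence of cross terms $a_j f^{(j)}g^{(n-j)}$ with $j\geq1$, which is why the converse is stated only for $n=2$ and $a=0$.
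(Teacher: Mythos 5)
Your sufficiency argument is correct, but it takes a genuinely different route from the paper's. You control the $n$-th order (2-averaged) area function of $T_{g,a}f$ directly, peel off each factor via the Cauchy estimate $|f^{(j)}(z)|(1-|z|)^{j}\lesssim Nf(\theta)$ on a slightly enlarged Stolz angle, and finish with H\"older for the conjugate pair $(p/q,\,s/q)$; this gives the quantitative bilinear bound $\norm T_{g,a}f\norm_{H^q}\lesssim \norm g \norm_{H^s}\norm f \norm_{H^p}$ in one stroke, provided you justify the higher-order square-function equivalences you invoke (essentially Proposition \ref{VariationPalley} plus a Lemma \ref{EstimateForFefferman}-type subharmonicity estimate, in their area-function form). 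The paper instead argues by induction on $n$, factoring $f^{(k)}=FG_k^{(k)}$ by Cohn's theorem and using the recursion $T_g^{n+1,k}f=T_g^{n,k}f-T_{G_k}^{n+1,n-k}T_g^{n-k,0}F$ together with Theorem \ref{p=q} and the known $n=1$ case. Your route is more self-contained and yields explicit constants; the paper's reuses machinery already built for the other theorems.

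The necessity half, however, has a genuine gap. The ``Littlewood--Paley identity''
\begin{equation*}
\norm T_g f \norm_{H^q}^q \eqsim \int_\DD |f(z)|^q|g''(z)|^q(1-|z|^2)^{2q-1}\,dA(z)
\end{equation*}
is true only for $q=2$; for $q\neq 2$ only one of the two inequalities holds (the classical one-sided Littlewood--Paley inequalities). The direction you need --- that boundedness of $T_g$ forces the measure $d\mu=|g''|^q(1-|z|)^{2q-1}dA$ to embed $H^p$ into $L^q(\mu)$ --- requires $\int_\DD|F''|^q(1-|z|)^{2q-1}dA\lesssim\norm F\norm_{H^q}^q$, which fails for every $q<2$: for lacunary series $F=\sum a_kz^{2^k}$ one has $\norm F\norm_{H^q}\eqsim(\sum|a_k|^2)^{1/2}$ while the area integral is comparable to $\sum|a_k|^q$. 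The same obstruction reappears at the end of your argument: Luecking's balayage condition delivers a $q$-averaged area condition on $g''$, namely $\int_\TT\big(\int_{\Gamma(\theta)}|g''|^q(1-|z|)^{2q-2}dA\big)^{s/q}d\theta<\infty$, and this is not equivalent to $g\in H^s$ unless $q=2$, again by lacunary examples. So the reduction to a Carleson-measure embedding does not prove the converse for general $0<q<p$; a tent-space version would have to keep the $2$-averaged area function throughout, i.e. an embedding of $H^p$ into a $T^q_2$-type space, which is a harder problem than Luecking's.

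For comparison, the paper's necessity proof is of a completely different nature: for $p$ large relative to $q$ it tests $T_{g,0}$ on $G=\mathcal{H}\big(|g|^\alpha/(1+\epsilon|g|)^\alpha\big)^\beta$, a power of a Herglotz transform, using that $\log G$ has small $BMOA$ norm to absorb the error terms produced by integration by parts; this yields $\norm Gg\norm_q\lesssim\norm G\norm_p$, and $g\in H^s$ follows after optimizing $\alpha,\beta$ and letting $\epsilon\to0$ (Lemma \ref{ToLemma}). The restriction to large $p$ is then removed by writing $T_{g,0}(f_1f_2\cdots f_n)=T_{h,0}(f_1)$ with $h=T_{g,0}(f_2\cdots f_n)$ and bootstrapping, with Riesz--Thorin keeping the constants uniform. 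Note also that the reason the converse is stated only for $n=2$, $a=0$ is that this test-function argument is specific to that case (the general converse is left as a conjecture), not merely the presence of cross terms.
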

It is natural to state the following conjecture.
	\begin{conj}
		Is it true that if $ T_{g,a}:H^p\to H^q, 0<q<p<\infty $ is bounded then $ g $ must be in $ H^s, \frac{1}{s}=\frac{1}{q}-\frac{1}{p} $? 
	\end{conj}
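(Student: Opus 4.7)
The plan is to generalize the argument that proves the $n=2, a=0$ part of Theorem \ref{p>q} via a Littlewood--Paley representation followed by a reduction over the nonzero coefficients. Since $T_{g,a}f$ vanishes to order $n$ at the origin, the $n$-th derivative form of the $H^q$-norm gives
\begin{equation*}
\|T_{g,a}f\|_{H^q}^q \asymp \int_\DD (1-|z|^2)^{nq-1}\, \Big|\, \sum_{k=0}^{n-1} a_k\, f^{(k)}(z)\, g^{(n-k)}(z)\, \Big|^q\, dA(z),
\end{equation*}
with the convention $a_0 = 1$. The hypothesis $\|T_{g,a}f\|_{H^q} \lesssim \|f\|_{H^p}$ therefore becomes a uniform Carleson-measure type estimate for the holomorphic combination $h_f := \sum_k a_k f^{(k)} g^{(n-k)}$ weighted by $(1-|z|^2)^{nq-1} dA$. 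The goal is to deduce $g \in H^s$, which by the same identity amounts to the $L^1$-integrability of $(1-|z|^2)^{ns-1}|g^{(n)}(z)|^s$.

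To extract information about $g$, I would set $k^\ast = \max\{j : a_j \neq 0\}$ (with $k^\ast = 0$ if $a=0$) and test the inequality on an $H^p$-normalized family whose derivatives have a controlled hierarchy, for instance $f_w(z) = (1-|w|^2)^{\beta-1/p}(1-\bar w z)^{-\beta}$ concentrated at a boundary point $w$. Its derivatives satisfy $f_w^{(k)}(w) \asymp (1-|w|^2)^{-k-1/p}$, so the term $a_{k^\ast} f_w^{(k^\ast)} g^{(n-k^\ast)}$ is strictly larger in scale than the others as $|w| \to 1$. If one can rule out cancellation between the top term and the lower-order ones (for example by averaging $w \mapsto e^{i\theta}w$ in $\theta$), one is left with the assertion that $(1-|z|^2)^{nq-1}|g^{(n-k^\ast)}(z)|^q dA$ is a $q$-Carleson measure for $H^p$. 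Luecking's characterization in the range $0<q<p$ via an $L^{p/(p-q)}$-condition on an associated tent function would then place $g^{(n-k^\ast)}$ in an appropriate weighted Bergman-type space which, upon $n-k^\ast$ integrations, is equivalent to $g \in H^s$ modulo a polynomial. Subtracting the contribution now controlled, one would iterate on an operator with strictly fewer nonzero coefficients. In the base case $k^\ast = 0$ (namely $a=0$) only the single term $fg^{(n)}$ survives and the argument collapses to a direct extension of the proof of the $n=2$ case in Theorem \ref{p>q}, so this portion of the conjecture should be accessible for arbitrary $n$.

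The main obstacle, and presumably the reason the statement is offered only as a conjecture, is precisely the cancellation among the terms $a_k f^{(k)} g^{(n-k)}$. The available bound controls the modulus of the analytic sum $h_f$, not of its individual summands, and $g^{(n-k^\ast)}$ may well be large only where the lower-order contributions destructively interfere with it. An averaging trick can kill cross terms in an $L^2$ sense, but in the non-Hilbert range $q \neq 2$ and for general $p$, it is not obvious how to recover pointwise dominance of the top-order term. Overcoming this cancellation in the regime $a \neq 0$ seems to require a genuinely new input beyond the techniques used for the base case.
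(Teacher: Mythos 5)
You are attempting an open problem: the paper states this as a conjecture and proves necessity of $g\in H^s$ only for $n=2$ with $a=0$ (and, via Lemma \ref{ToLemma}, for $n=2$ and arbitrary $a$ under the extra restriction $p>C(q)$), so there is no complete proof to measure your sketch against. Judged on its own terms, your plan has two concrete defects in addition to the cancellation obstacle you honestly flag. First, your opening display is not a valid equivalence: for $q\neq 2$ the Hardy norm $\norm F\norm_{H^q}^q$ is comparable to $\int_{\TT}G_n(F)^q\,dm$ with $G_n(F)(t)=\big(\int_0^1|F^{(n)}(re^{it})|^2(1-r)^{2n-1}dr\big)^{1/2}$ (Proposition \ref{VariationPalley}), \emph{not} to the Bergman-type integral $\int_{\DD}(1-|z|^2)^{nq-1}|F^{(n)}|^q\,dA$; only one of the two Littlewood--Paley inequalities holds in each range of $q$, so both your reformulation of the hypothesis and your claim that $g\in H^s$ ``amounts to'' $(1-|z|^2)^{ns-1}|g^{(n)}|^s\in L^1(dA)$ are incorrect as stated. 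Second, in the regime $q<p$ Carleson-type embeddings are not detected by testing on the family $f_w$: Luecking's characterization is a global $L^{p/(p-q)}$ condition on a tent/balayage function, and the reproducing-kernel thesis fails precisely in this range. Testing on $f_w$ yields only pointwise Bloch-type growth bounds on $g^{(n-k^\ast)}$ (this is exactly what the paper extracts in the $p\leq q$ cases via Lemma \ref{LinearAlg}), which is far weaker than membership in $H^s$.

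This is why the paper's partial result proceeds by a different device: it manufactures a single test function $G=\mathcal{H}\big(|g|^\alpha/(1+\epsilon|g|)^\alpha\big)^\beta$ out of $g$ itself, with $\beta$ small so that $\norm\log G\norm_*$ is small and the lower-order terms of the integration-by-parts expansion can be absorbed into the main term $Gg$, and with $\alpha,\beta$ matched so that $\norm Gg\norm_q$ and $\norm G\norm_p$ both compute truncated $H^s$ quantities. Even there the absorption forces $p>C(q)$, a restriction removed only for $a=0$ through the identity $T_{g,0}(f_1f_2\cdots f_n)=T_{T_{g,0}(f_2\cdots f_n),0}(f_1)$, which has no analogue when $a\neq 0$ since $(T_{g,a}h)^{(n)}\neq hg^{(n)}$. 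Your diagnosis that cancellation among the terms $a_kf^{(k)}g^{(n-k)}$ is the crux is sound, but even granting no cancellation, the route you propose would not reach the conclusion $g\in H^s$.
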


	Theorems \ref{p=q} and \ref{p>q} show that the behaviour, regarding boundedness, of the operator $T_{g,a} $, in the known cases, is essentially the same with this of $ T_g $ when $ p\geq q $. In the case that $ p<q $ the operator exhibits a different behaviour because its boundedness depends on its last non zero term. It is interesting to notice that in all known cases, if $ T_{g,a} $ is bounded, every term comprising the operator is forced to be bounded as well. In other words there is no cancellation between the terms. 
	
   In the light of the manifold applications that the generalized Cesaro operator has in function theory of the Hardy space, it is natural to investigate potential applications of the generalized integration operator. Not surprisingly, using $ T_{g,\alpha} $ one can derive a result which generalizes the analytic John-Nirenberg inequality and a second Theorem which generalizes factorization of derivatives in the Hardy space. Both results exist already in the literature but our method allows for a unified and simpler approach. 
   
    The first one is of R\"atty\"a \cite{Rat07} on solutions of complex linear differential equations. 
   
   \begin{theorem}\label{DifEq}
   	Let $n\in\NN, 0<p<\infty, f_0\in H^p, G\in BMOA $ and $ g_i\in\cB, 1\leq i < n $. There exists a constant $ A>0 $ depending on $ p$  such that if $ \norm G \norm_*, \norm g_i \norm_\cB < A $, every solution of the non homogeneous linear differential equation 
   	\begin{equation*}
   	G^{(n)}f+g_1^{(n-1)}f'+\dots+g_{n-1}'f^{(n-1)}+f^{(n)}=f_0^{(n)}
   	\end{equation*} is in $ H^p $. If $ G\in VMOA $ and $g_i\in \cB_0  $, the same result holds without the restriction in the norm of $ g_i $ and $ G $.
   \end{theorem}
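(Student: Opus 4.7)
The plan is to rewrite the differential equation as an operator equation of the form $(I+S)f = F$ on $H^p$, where $S$ is a sum of generalized integration operators, and then invert $I+S$ either by a Neumann-series argument in the small-norm case or by a Fredholm / ODE-uniqueness argument in the $\mathrm{VMOA}/\cB_0$ case. Concretely, applying $\III^n$ to both sides of the ODE and using $\III^n h^{(n)} = h - p_h$, where $p_h(z) := \sum_{k=0}^{n-1} h^{(k)}(0) z^k/k!$ is the degree-$(n{-}1)$ Taylor polynomial of $h$ at $0$, the equation is equivalent to $(I + S)f = f_0 + (p_f - p_{f_0})$ with
\[
Sh := \III^n\bigl(G^{(n)} h\bigr) + \sum_{i=1}^{n-1}\III^n\bigl(g_i^{(n-i)} h^{(i)}\bigr).
\]
Since the right-hand side belongs to $H^p$, the task reduces to showing that $I+S$ is invertible on $H^p$.

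The key analytic step is the estimate
\[
\|S\|_{H^p\to H^p} \le C(p,n)\Bigl(\|G\|_* + \sum_{i=1}^{n-1}\|g_i\|_\cB\Bigr),
\]
together with the compactness of $S$ in the $\mathrm{VMOA}/\cB_0$ regime. The piece $\III^n(G^{(n)}h)$ is exactly $T_{G,0}h$, handled directly by Theorem~\ref{p=q}. For $i \geq 1$ the mixed terms $\III^n(g_i^{(n-i)}h^{(i)})$ are not literally of the form $T_{g,a}$, but the square-function estimates used to prove Theorem~\ref{p=q} apply: from the non-tangential characterization $\|u\|_{H^p}^p \asymp \int_\TT \bigl(\int_{\Gamma(\zeta)}|u^{(n)}(z)|^2(1-|z|)^{2n-1}\,dA\bigr)^{p/2}|d\zeta|$ applied to $u = \III^n(g_i^{(n-i)}h^{(i)})$, the pointwise Bloch bound $|g_i^{(n-i)}(z)|^2 \lesssim \|g_i\|_\cB^2(1-|z|)^{-2(n-i)}$ reduces the inner integral to $\int_{\Gamma(\zeta)}|h^{(i)}(z)|^2(1-|z|)^{2i-1}\,dA$, which is the area function associated to the $i$-th derivative of $h$ and hence equivalent to $\|h\|_{H^p}^p$. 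The little-oh Bloch hypothesis upgrades this to compactness through the standard approximation of $g_i$ by polynomial symbols.

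With $S$ controlled, invertibility of $I+S$ on $H^p$ follows painlessly. In the small-norm regime, choose $A = A(p,n)$ so small that $C n A < 1$ and invert $I+S$ by the Neumann series. In the $\mathrm{VMOA}/\cB_0$ case, $S$ is compact on $H^p$, hence $I+S$ is Fredholm of index zero, so it suffices to establish injectivity. But if $(I+S)h = 0$, then $h = -Sh$ lies in the image of $\III^n$, hence vanishes to order $n$ at the origin, and differentiating $n$ times shows that $h$ satisfies the homogeneous ODE with zero Cauchy data, forcing $h \equiv 0$ by the analytic-ODE uniqueness theorem; exactly the same argument shows that $I+S$ is injective on all of $\mathrm{Hol}(\DD)$. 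Given now any analytic solution $f$ of the non-homogeneous equation, $(I+S)f = f_0 + p_f - p_{f_0}$ holds in $\mathrm{Hol}(\DD)$; setting $h := (I+S)^{-1}(f_0 + p_f - p_{f_0}) \in H^p$ and invoking injectivity on $\mathrm{Hol}(\DD)$ yields $h = f$, so $f \in H^p$. The main obstacle is the mixed-term estimate in Step~2: one must verify that a Bloch (rather than BMOA) bound on $g_i$ is indeed enough when $i \ge 1$, a property which is implicit in the proof of Theorem~\ref{p=q} but must be extracted from the per-term estimates there.
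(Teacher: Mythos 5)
Your proposal is correct and follows essentially the same route as the paper: you form the same operator ($S$ is the paper's $D$), bound and compactify it by the same per-term machinery behind Theorem~\ref{p=q} (Bloch estimates for the mixed terms, $BMOA$ for the zero-order term), invert $I+S$ by a Neumann series in the small-norm case and by compactness plus absence of the eigenvalue $-1$ in the $VMOA/\cB_0$ case, and identify the given solution with the $H^p$ solution via uniqueness for the analytic Cauchy problem. The only difference is cosmetic: the paper matches initial data through an auxiliary $F\in H^p$ and the spectrum of the compact operator, while you use injectivity of $I+S$ on $\Hol(\DD)$ and the Fredholm alternative, which amounts to the same argument.
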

   
   To see why this is a Corollary of Theorem \ref{p=q} define the operator
   \begin{equation*}
   Df=\III^{n}(fG^{(n)}+f'g_1^{(n-1)}+\cdots+f^{(n-1)}g_{n-1}')
   \end{equation*} which by Theorem \ref{p=q} is bounded on $ H^p $.
   By an application of the closed graph theorem, there exists a constant $ C $ depending only on $ p$ such that 
   \begin{equation*}
   \norm D \norm_p \leq C(\norm G \norm_* + \norm g_1 \norm_\cB + \cdots + \norm g_{n-1} \norm_\cB).
   \end{equation*}
   If $ \norm G \norm_*, \norm g_i \norm_\cB < 1/(nC)  $, $ -1 $ is in the resolvent set of the operator $ D $ on $ H^p $. Let now $ f $ any solution of the above differential equation. There exists $ F\in H^p $ such that $ F-f_0 $ is a polynomial of degree less than $ n $ and $ F^{(i)}(0)=f^{(i)}(0), 0\leq i <n $. Then for some $ \tilde{f}\in H^p , D\tilde{f}+\tilde{f}=F $. But $ \tilde{f} $ and $ f $ satisfy the same differential equation with the same initial conditions, hence $ f=\tilde{f}\in H^p $. If $ G\in VMOA $ and $ g_i\in \cB_0 $, $ D $ is compact. It is easy to check that it has no eigenvalues, therefore its spectrum is the singleton $ \{0\} $, thus the result follows.

   The second result which follows from our analysis is a factorization Theorem for derivatives of functions in the Hardy space. For $ n=1 $ the result can be found in \cite{Aleman07}, and it was later generalized by Cohn for derivatives of any order in \cite{COHN2000308}. 
   
   \begin{theorem}\label{Cohn}
   	Let $ f\in H^p $, then there exist $ F\in H^p $ and $ G_n\in BMOA, n\in\mathbb{N}$ such that $ f^{(n)}=FG_n^{(n)} $.
   \end{theorem}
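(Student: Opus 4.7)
The plan is to recast Theorem~\ref{Cohn} as a surjectivity statement for a bilinear map built from $T_{g,0}$ and deduce that surjectivity from Theorem~\ref{p=q} via a weak-to-strong factorization scheme. Specializing $a=0$ gives $T_{g,0}F=\III^n(Fg^{(n)})$, so that $(T_{g,0}F)^{(n)}=Fg^{(n)}$. The identity $f^{(n)}=FG_n^{(n)}$ is therefore equivalent to $T_{G_n,0}F=f-P_{n-1}$, where $P_{n-1}$ denotes the Taylor polynomial of $f$ at the origin of degree less than $n$. Setting $H^p_{0,n}:=\{h\in H^p : h^{(k)}(0)=0,\ 0\le k<n\}$, Theorem~\ref{Cohn} is equivalent to surjectivity of the bilinear map
\[
\Phi:H^p\times BMOA\longrightarrow H^p_{0,n},\qquad \Phi(F,G):=T_{G,0}F.
\]
Theorem~\ref{p=q} (with $a=0$) provides continuity: $\norm \Phi(F,G)\norm_p \lesssim \norm F\norm_p\, \norm G\norm_*$.

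\emph{Weak factorization.} The first step is to show that every $h\in H^p_{0,n}$ admits a representation $h=\sum_k\Phi(F_k,G_k)$ with $\sum_k\norm F_k\norm_p^r\, \norm G_k\norm_*^r\lesssim \norm h\norm_p^r$, where $r=\min(p,1)$. For $p\ge 1$ a Hahn--Banach argument reduces the weak factorization to showing that no nontrivial functional on $H^p_{0,n}$ annihilates the range of $\Phi$, and this is handled via the duality $(H^p)^*\cong H^{p'}$ together with Theorem~\ref{p=q} applied to the adjoint. For $p<1$ one uses the atomic decomposition of $H^p$ to reduce the statement to factorizing a single atom, which is done explicitly using the geometry of its Carleson supporting square.

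\emph{Strong factorization.} The series representation is then collapsed to a single product by an iterative halving scheme: starting from $h$, one extracts a single pair $(F_1,G_1)$ satisfying $\norm h-\Phi(F_1,G_1)\norm_p\le\tfrac12\norm h\norm_p$ and $\norm F_1\norm_p\,\norm G_1\norm_*\lesssim\norm h\norm_p$, and iterates on the residual to obtain geometrically decaying sequences $(F_k),(G_k)$. These are amalgamated into a single pair $(F,G_n)$: $F$ is taken to be an outer function whose boundary modulus dominates $\bigl(\sum_k |F_k|^p\bigr)^{1/p}$, and $G_n$ is defined implicitly via $G_n^{(n)}:=f^{(n)}/F$. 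Finally, $G_n\in BMOA$ is verified through the Carleson-measure characterization asserting that $|G_n^{(n)}(z)|^2(1-|z|^2)^{2n-1}\,dA(z)$ is a Carleson measure.

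\emph{Main obstacle.} The technical heart of the argument is the strong-factorization step. Controlling the outer factor $F$ in $H^p$ and simultaneously the $BMOA$-seminorm of $G_n$ requires a careful interplay between the inner-outer factorization of $H^p$ and the Carleson-measure characterization of $BMOA$. The simplification $a=0$---no cross-terms $F^{(j)}G^{(n-j)}$ with $j\ge 1$ appear in $T_{G,0}$---is essential here, since otherwise the iterative scheme would introduce error terms involving derivatives of the $F_k$ that would obstruct the final collapse into a single product.
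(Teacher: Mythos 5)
There is a genuine gap: the proposal outlines a ``weak factorization then strong factorization'' programme but never carries out the step that constitutes the actual content of the theorem. The collapse of the series $\sum_k\Phi(F_k,G_k)$ into a single product is unjustified at two points. First, the ``iterative halving'' requires extracting a single pair with $\norm h-\Phi(F_1,G_1)\norm_p\le\tfrac12\norm h\norm_p$ and $\norm F_1\norm_p\norm G_1\norm_*\lesssim\norm h\norm_p$; this does not follow from a weak factorization (a series $\sum_k x_k=h$ with $\sum_k\norm x_k\norm\lesssim\norm h\norm$ need not contain a single term within $\tfrac12\norm h\norm$ of $h$), and no construction of such a pair is offered. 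Second, and more importantly, once you decide to take $F$ outer and define $G_n$ by $G_n^{(n)}:=f^{(n)}/F$, the entire theorem reduces to proving $G_n\in BMOA$ --- and the proposal simply asserts that this ``is verified through the Carleson-measure characterization.'' Domination of $\bigl(\sum_k|F_k|^p\bigr)^{1/p}$ by the boundary modulus of $F$ gives no pointwise interior control of $|f^{(n)}(z)|/|F(z)|$, which is what the Carleson-measure estimate for $|G_n^{(n)}(z)|^2(1-|z|^2)^{2n-1}dA(z)$ would require. The weak-factorization step is also not secured: the Hahn--Banach reduction as stated (``no nontrivial functional annihilates the range'') yields only density of the span of the range, not the quantitative decomposition; the duality $(H^p)^*\cong H^{p'}$ fails at $p=1$; and for $p<1$ the explicit factorization of an atom in the form $\III^n(FG^{(n)})$ is a nontrivial construction that is left entirely to the reader.

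The paper's proof avoids all of this by a direct induction. It takes $F=\mathcal{H}(|f|^{p/2})^{2/p}$, so that $F\in H^p$ and, since $F^{p/2}$ has positive real part, $\log F\in BMOA$. Assuming $f^{(n-1)}=FG_{n-1}^{(n-1)}$, differentiating gives
\begin{equation*}
f^{(n)}=F\bigl((\log F)'G_{n-1}^{(n-1)}+G_{n-1}^{(n)}\bigr)=F\bigl(\MT{\log F}{n}{n-1}G_{n-1}+G_{n-1}\bigr)^{(n)},
\end{equation*}
so one sets $G_n:=G_{n-1}+\MT{\log F}{n}{n-1}G_{n-1}$ and invokes the auxiliary lemma that $\MT{g}{n}{n-1}$ maps $\cB$ boundedly into $BMOA$ when $g\in BMOA$ (itself a quick consequence of the necessity part of Theorem \ref{p=q} and Proposition \ref{Bloch}). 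The key idea your proposal is missing is precisely this pair of observations --- $\log F\in BMOA$ for the canonical outer majorant, and the recursive identity expressing $G_n$ through the operator $\MT{\log F}{n}{n-1}$ --- which turns the $BMOA$ membership of $G_n$ into an already-established boundedness statement rather than a fresh Carleson-measure computation.
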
 

The paper is organized as follows. In Section \ref{Prel} we give some preliminary results in Hardy spaces which we shall use repeatedly later.  In Section \ref{Proofs} we give the proofs of the main Theorems and  we conclude in the last Section by discussing some open questions.
\\
\paragraph*{Notation} As it is customary we shall use the notation $ A \lesssim B $, when there exists some constant $ C>0 $ independent of the parameters on which $ A, B $ such that $ A\leq CB $. If  $ A \lesssim B $ and $ B \lesssim A $ we write $ A \eqsim B $.  
	
	\subsection*{Acknowledgements} Most of the present work was done while the author was a MSc student at Lund University under the supervision of Professor Alexandru Aleman. I would like to express my gratitude to him for introducing me to the problems discussed in this article, and for his guidance and help. 


	\section{Preliminary Results}\label{Prel}

	First we will look at some variations on the Hardy-Stein identity. To do so we shall introduce the Stolz angle $ \Gamma_\sigma(e^{i\theta}) $ with vertex at $ e^{i\theta} $ and aperture $ \sigma $. That is, the interior of the convex hull of the point $ e^{i\theta} $ and the disc $ D(0,\sigma) $. Then we can define the so called square function or Lusin area function $ S_f $.
	\begin{equation*}
	S_f(\zeta)=\Big( \int_{\Gamma_\sigma(\zeta)}|f'|^2dA \Big)^{1/2}, \zeta\in \mathbb{T}.
	\end{equation*} A well known result of Fefferman and Stein (see \cite{FeffermanStein72}) states that if $ 0<p<\infty $ there exist constants $ C_1,C_2>0 $, depending only on $ \sigma $ and $ p $, such that \begin{equation*}
	C_1 \norm f \norm_p^p \leq|f(0)|^p+ \int_{\mathbb{T}}S^p_fdm \leq  C_2 \norm f \norm _p^p,
	\end{equation*} for any $ f $ analytic in $ \mathbb{D} $. Another related function is the Paley-Littlewood $ G-$function, which is defined by 
	\begin{equation*}
	G(f)(t)=\Big(\int_{0}^{1}|f'(re^{it})|^2(1-r)dr\Big)^{1/2}, \,\,\, t\in\mathbb{R}.
	\end{equation*} The Paley-Littlewood $ G-$ function enjoys the same property as the Lusin area function, i.e. there exist $ C_1,C_2>0 $, depending only on $ \sigma $ and $ p $, such that \begin{equation}\label{GFunction}
	C_1 \norm f \norm_p^p \leq |f(0)|^p+\int_{\mathbb{T}}G^p(f)dm \leq  C_2 \norm f \norm _p^p,
	\end{equation} for any $ f $ analytic in $ \mathbb{D} $. For more information about these functions the reader is referred to \cite{FeffermanStein72}. 
	
	For our purposes, we need a version of the Paley-Littlewood $ G-$function, involving only the $ n-th$ derivative of $ f $. We start with a lemma.
	
	\begin{lem}\label{EstimateForFefferman}
		Let $ f $ be an analytic function in the unit disc, then for $ z\in\mathbb{D} $ \begin{equation*}
		|f^{(n)}(z)|^2 \leq \frac{n!(n-1)!2^{2n}}{(1-|z|)^{2n}}\int_{D(z,\frac{1-|z|}{2})}|f'(\zeta)|^2dA(\zeta).
		\end{equation*}
	\end{lem}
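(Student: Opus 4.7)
The plan is to combine the Taylor expansion of $f'$ centered at $z$ with the orthogonality of the angular modes $e^{ij\theta}$ so as to isolate precisely the coefficient that carries $f^{(n)}(z)$. Set $\rho=(1-|z|)/2$, so that $\overline{D(z,\rho)}\subset\DD$, and expand
$$
f'(\zeta)=\sum_{j\ge 0}c_j(\zeta-z)^j,\qquad c_j=\frac{f^{(j+1)}(z)}{j!},
$$
convergent on a neighbourhood of $\overline{D(z,\rho)}$. The key observation is that $(n-1)!\,c_{n-1}=f^{(n)}(z)$, so the task reduces to extracting this single Taylor coefficient from an $L^2$-norm.

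First, I would pass to polar coordinates centred at $z$ and use the relation $\int_0^{2\pi}e^{i(j-k)\theta}\,d\theta=2\pi\delta_{jk}$ to obtain the Parseval-type identity
$$
\int_{D(z,\rho)}|f'(\zeta)|^2\,dA(\zeta)=\pi\sum_{j\ge 0}\frac{|c_j|^2\,\rho^{2j+2}}{j+1}.
$$
This step is just bookkeeping once orthogonality is invoked, but it is the crux of the argument.

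Next I would discard every term in the series except $j=n-1$, which yields a valid lower bound, and solve for $|c_{n-1}|^2$. Multiplying through by $((n-1)!)^2$ and substituting $\rho=(1-|z|)/2$ then gives
$$
|f^{(n)}(z)|^2\le\frac{n\,((n-1)!)^2\,2^{2n}}{\pi(1-|z|)^{2n}}\int_{D(z,(1-|z|)/2)}|f'(\zeta)|^2\,dA(\zeta),
$$
and since $n\cdot((n-1)!)^2=n!(n-1)!$ this is in fact slightly stronger than the stated inequality (by the factor $1/\pi$).

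There is essentially no obstacle here; the only step requiring care is keeping track of the radial factors $\rho^{2j+2}/(j+1)$ so that no power of $\rho$ or combinatorial constant is lost. An alternative route would combine the sub-mean-value inequality for the subharmonic function $|f^{(n)}|^2$ on $D(z,\rho)$ with a Cauchy-type representation of $f^{(n)}(w)=(f')^{(n-1)}(w)$ as a contour integral of $f'$ around a circle about $w$, but the Taylor--Parseval approach above is cleaner and gives the sharp dependence on $n$ directly.
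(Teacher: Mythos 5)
Your proof is correct, and it is essentially the paper's argument: the paper also extracts a single Taylor coefficient from an $L^2$ norm of $f'$ via orthogonality (phrased as the Dirichlet-integral identity $\int_{\DD}|f'|^2dA=\sum_k k|a_k|^2$ at the origin, then transferred to $D(z,\tfrac{1-|z|}{2})$ by the change of variables $\zeta\mapsto z+r\zeta$), whereas you expand $f'$ directly at $z$ and apply Parseval on the small disc — the same computation in different coordinates. The apparent extra factor $1/\pi$ in your constant is only a matter of whether $dA$ is normalized area measure, so there is no discrepancy of substance.
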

	\begin{proof}
	 The expression of the Dirichlet integral of $ f $ in terms of its Taylor coefficients is \begin{equation*}
		\int_{\mathbb{D}}|f'(\zeta)|^2dA(\zeta) = \sum_{k\geq 0}k|a_k|^2.
		\end{equation*}Using this we have,
		\begin{equation} \label{DerivativeEstimate}
		|f^{(n)}(0)|^2 = (n!)^2|a_n|^2 
		\leq n!(n-1)! \sum_{k\geq 0}k|a_k|^2 = n!(n-1)!\int_{\mathbb{D}}|f'(\zeta)|^2dA(\zeta).
		\end{equation} 
		Now let $ z\in\mathbb{D} $ fixed and set $ r=\frac{1-|z|}{2} $. Applying (\ref{DerivativeEstimate}) to the function $ f_z(\zeta)=f(z+r\zeta), \zeta\in\mathbb{D} $, and using a change of variables: \begin{align*}
		r^{2n}|f^{(n)}(z)|^2 & \leq n!(n-1)! \int_{\mathbb{D}}r^2|f'(z+r\zeta)|^2dA(\zeta) \\
		& = n!(n-1)! \int_{D(z,\frac{1-|z|}{2})}|f'(\zeta)|^2dA(\zeta),
		\end{align*} which gives the desired inequality.
	\end{proof}
	
	Now, let $ f $ analytic in $ \mathbb{D} $. We define the Paley-Littlewood $ G_k-$function of order $ k $ to be  \begin{equation}
	G_k(f)(t)=\Big(\int_{0}^{1}|f^{(k)}(re^{it})|^2(1-r)^{2k-1}dr\Big)^{1/2}.
	\end{equation}
	\begin{prop}\label{VariationPalley}
		Let $p>0 $ and $ f $ analytic in $ \mathbb{D} $ with $ f^{(i)}(0)=0, 0\leq i<k $ then, for any $ k\in\mathbb{Z}_+ $, there exist constants $ C_1, C_2 $ depending only on $ p $ and $ k $ such that \begin{equation*}
		C_1\norm f \norm_{H^p} \leq \norm G_k(f) \norm_{L^p} \leq C_2 \norm f \norm_{H^p}.
		\end{equation*}
		\begin{proof}
			We prove the left inequality for $ k=2 $, the general case follows by induction. First, let $ f $ be analytic in an open set containing the closure of $ \mathbb{D} $. Then for $ t\in\mathbb{R} $ fixed
			\begin{align*}
			G_1^2(f)(t)=\int_{0}^{1}|f'(re^{it})|^2(1-r)dr &= \frac{1}{2}\int_{0}^{1}\frac{\partial |f'(re^{it})|^2}{\partial r} (1-r)^2dr \\
			&= \int_{0}^{1}\Rl(e^{it}f''(re^{it})\bar{f'}(re^{it}))(1-r)^2dr \\
			&\leq \int_{0}^{1}|f''(re^{it})||f'(re^{it})|(1-r)^2dr \\ 
			&\leq G_1(f)(t)G_2(f)(t),
			\end{align*}by Cauchy-Schwarz. Dividing through by $ G_1(f)(t) $ we have the result for $ f $ analytic in a larger disc. Now if $ f $ is an arbitrary analytic function, fix $ 0<\rho<1 $ and consider the dilations $ f_\rho(z)=f(\rho z) $. Then, \begin{align*}
			G_1(f_\rho)(t) &\leq \rho^4\int_{0}^{1}|f''(r\rho e^{it})|^2(1-r)^3dr \\
			&\leq \rho\int_{0}^{\rho}|f''(ue^{it})|^2(1-u)^3du.
			\end{align*}Then by taking liminf in both sides as $ \rho\to 1^- $ and applying Fatou's lemma on the left and monotone convergence on the right we conclude that $ G_1(f)\leq G_2(f) $. The desired inequality then follows by (\ref{GFunction}).
			
			To prove the right inequality, we will use Lemma \ref{EstimateForFefferman}. First note that for any $ z=re^{i\theta}\in\mathbb{D},  D(re^{i\theta},\frac{1-r}{2}) \subset \Gamma_{\frac{1}{2}}(e^{i\theta}) $, which, together with Lemma \ref{EstimateForFefferman}, justifies the following calculation \begin{align*}
			G^2_{k}(f)(\theta) &=\int_{0}^{1}|f^{(k)}(re^{i\theta})|^2(1-r)^{2k-1}dr \\
			&\leq C_k \int_{0}^{1}\int_{D(re^{i\theta},\frac{1-r}{2})}|f'(\zeta)|^2dA(\zeta)(1-r)^{-1}dr \\ 
			&= C_k \int_{\Gamma_{\frac{1}{2}}(e^{i\theta})} |f'(\zeta)|^2 \int_{0}^{1} \chi_{D(r,\frac{1-r}{2})}(\zeta)(1-r)^{-1}drdA(\zeta).
			\end{align*}
			It is routine to check that if $ |\zeta|<\frac{3r-1}{2} $ or $ |\zeta|>\frac{1+r}{2} $ then $ \zeta \not\in D(r,\frac{1-r}{2}) $. Hence 
			\begin{equation*}
			\int_{0}^{1} \chi_{D(r,\frac{1-r}{2})}(\zeta)(1-r)^{-1}dr \leq \int_{2|\zeta|-1}^{\frac{2|\zeta|+1}{3}}(1-r)^{-1}dr = \log 3
			\end{equation*} Therefore we have proven that $ G_k^2(f)(\theta) \leq S_f^2(\theta) $ and the estimate follows by Fefferman-Stein's theorem.
		\end{proof}
	\end{prop}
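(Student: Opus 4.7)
The plan is to sandwich $G_k(f)$ between two quantities whose $L^p$ norms are already comparable to $\|f\|_{H^p}$: on one side the Lusin area function $S_f$, on the other the classical Paley--Littlewood function $G_1(f)=G(f)$. In both cases the conclusion then follows from \eqref{GFunction} (together with Fefferman--Stein for $S_f$).

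For the upper bound $\|G_k(f)\|_{L^p}\lesssim \|f\|_{H^p}$, I would substitute the pointwise estimate of Lemma \ref{EstimateForFefferman} into $G_k^2(f)(\theta)$. The $(1-r)^{-2k}$ produced by the lemma combines with the weight $(1-r)^{2k-1}$ to leave a single $(1-r)^{-1}$, and a Fubini swap rewrites $G_k^2(f)(\theta)$ as an integral over the Stolz cone $\Gamma_{1/2}(e^{i\theta})$ with an inner integral $\int_0^1\chi_{D(r,(1-r)/2)}(\zeta)(1-r)^{-1}\,dr$. A short planar check confines the range of $r$ for which the indicator equals $1$ to an interval roughly of the form $[2|\zeta|-1,(2|\zeta|+1)/3]$, making the inner integral a uniform constant. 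What remains is the pointwise bound $G_k^2(f)\lesssim S_f^2$, and Fefferman--Stein closes the argument.

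For the lower bound I aim to prove the pointwise inequality $G_j(f)(t)\leq c_j G_{j+1}(f)(t)$ for $1\leq j\leq k-1$, valid under the hypothesis $f^{(j)}(0)=0$. Iterating from $j=1$ to $j=k-1$ gives $\|G_1(f)\|_{L^p}\lesssim \|G_k(f)\|_{L^p}$, and since $f(0)=0$ the bound \eqref{GFunction} yields the desired $\|f\|_{H^p}\lesssim \|G_k(f)\|_{L^p}$. For the one-step inequality I would integrate by parts in $G_j^2(f)(t)$ using $dv=(1-r)^{2j-1}\,dr$: the boundary term at $r=1$ vanishes because of the weight, while the one at $r=0$ is proportional to $|f^{(j)}(0)|^2=0$. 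What remains is a constant multiple of $\int_0^1(1-r)^{2j}\Rl(e^{it}f^{(j+1)}\overline{f^{(j)}})\,dr$; splitting the weight symmetrically as $(1-r)^{(2j-1)/2}(1-r)^{(2j+1)/2}$ and applying Cauchy--Schwarz bounds this by $c_j G_j(f)(t)G_{j+1}(f)(t)$, and dividing through produces the claim.

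The main technical obstacle is that the integration by parts is only formally legal when $f$ extends smoothly past $\partial\mathbb{D}$, which need not hold for arbitrary $f\in H^p$. I would handle this by applying the identity first to the dilations $f_\rho(z)=f(\rho z)$, which are analytic in a neighborhood of $\overline{\mathbb{D}}$, and then letting $\rho\to 1^-$ via Fatou's lemma on the $G_j$ side and monotone convergence on the $G_{j+1}$ side. Beyond this and the elementary geometric computation for the support of the indicator in the upper bound, the argument reduces to bookkeeping of constants.
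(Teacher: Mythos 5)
Your proposal is correct and follows essentially the same route as the paper: the upper bound via Lemma \ref{EstimateForFefferman}, Fubini over the Stolz cone, and Fefferman--Stein, and the lower bound via the one-step integration-by-parts inequality $G_j\leq c_jG_{j+1}$ (the paper does $k=2$ and invokes induction), with the same dilation/Fatou/monotone-convergence argument to pass to general $f$. Your explicit use of $f^{(j)}(0)=0$ to kill the boundary term at $r=0$ makes precise a point the paper leaves implicit.
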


	We conclude this section with a Lemma of linear algebra. For $ \gamma>0 $ we use the notation $ (\gamma)_0=1, (\gamma)_k=\gamma(\gamma+1)\cdots(\gamma+k-1) $ for $ k\geq 1 $.
	
	\begin{lem}\label{LinearAlg}
		Suppose that $ f_0, f_2, \dots f_{n-1} $ are complex valued functions on the unit disc (not necessarily analytic), such that for any $ \gamma\in \mathbb{R} $ sufficiently large there exists $ C_\gamma>0 $ such that 
		\begin{equation}
		\Big| \sum_{k=0}^{n-1}f_k(z)(\gamma)_k \Big| \leq C_\gamma, z\in\mathbb{D}.
		\end{equation}Then all $ f_k $ are bounded.
	\end{lem}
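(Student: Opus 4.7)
The plan is to reduce the statement to an invertible linear system obtained by sampling $\gamma$ at $n$ distinct values. The crucial algebraic observation is that $(\gamma)_k = \gamma(\gamma+1)\cdots(\gamma+k-1)$ is, as a function of $\gamma$, a monic polynomial of degree exactly $k$. In other words, if $U$ is the upper-triangular $n\times n$ matrix whose $k$-th column lists the coefficients of $(\gamma)_k$ in the basis $1,\gamma,\gamma^2,\dots,\gamma^{n-1}$, then $U$ has $1$'s on the diagonal and is unipotent.

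First, I choose $n$ distinct real numbers $\gamma_1 < \gamma_2 < \cdots < \gamma_n$, all of them large enough so that the hypothesis applies to each. Consider the matrix
\[
M = \bigl((\gamma_i)_k\bigr)_{1\le i \le n,\; 0 \le k \le n-1}.
\]
Using the factorization from the previous paragraph, $M = V\cdot U$ where $V = (\gamma_i^k)$ is the Vandermonde matrix of the $\gamma_i$'s. Hence $\det M = \det V = \prod_{i<j}(\gamma_j - \gamma_i) \neq 0$, so $M$ is invertible.

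Next, for each fixed $z \in \mathbb{D}$, the hypothesis gives $n$ scalar equations
\[
\sum_{k=0}^{n-1} (\gamma_i)_k\, f_k(z) = h_i(z), \qquad i = 1, \dots, n,
\]
where each $h_i$ satisfies $|h_i(z)| \le C_{\gamma_i}$ uniformly in $z$. In matrix form this reads $M \bigl(f_0(z), \dots, f_{n-1}(z)\bigr)^T = \bigl(h_1(z), \dots, h_n(z)\bigr)^T$. Since $M$ is invertible and its inverse does not depend on $z$, each $f_k(z)$ is a fixed complex-linear combination of $h_1(z), \dots, h_n(z)$, and therefore
\[
|f_k(z)| \le \sum_{i=1}^{n} \bigl|(M^{-1})_{ki}\bigr|\, C_{\gamma_i},
\]
which is a bound independent of $z$. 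This is the required boundedness of each $f_k$.

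There is really no serious obstacle; the only thing to verify carefully is the determinant computation. The neatest way is the $LU$-style factorization $M = VU$ noted above, but one could equally argue directly: the columns of $M$ are the images under the invertible change-of-basis from $\{1,\gamma,\dots,\gamma^{n-1}\}$ to $\{(\gamma)_0, \dots, (\gamma)_{n-1}\}$ of the columns of the Vandermonde matrix, so non-degeneracy of $V$ for distinct nodes transfers to $M$.
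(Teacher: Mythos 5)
Your proof is correct and follows essentially the same route as the paper: sample $\gamma$ at $n$ distinct large values, observe that the matrix $\bigl((\gamma_i)_k\bigr)$ has determinant $\prod_{i<j}(\gamma_j-\gamma_i)\neq 0$, and invert the resulting linear system to bound each $f_k$ by a fixed combination of the constants $C_{\gamma_i}$. Your justification of the determinant via the unipotent change of basis from $\{1,\gamma,\dots,\gamma^{n-1}\}$ to $\{(\gamma)_0,\dots,(\gamma)_{n-1}\}$ is in fact tidier than the paper's appeal to a ``tedious but standard calculation.''
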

	
	\begin{proof}
		Choose distinct $ \gamma_0, \gamma_2, \dots \gamma_{n-1} $ sufficiently large. It is a tedious but standard calculation that 
		\begin{equation*}
		\det\begin{bmatrix}
		(\gamma_0)_1 & (\gamma_0)_2 & \ldots
		& (\gamma_0)_{n-1} \\
		(\gamma_1)_1 & (\gamma_1)_2 & \ldots
		& (\gamma_1)_{n-1} \\
		\vdots & \vdots & \ddots
		& \vdots \\
		(\gamma_{n-1})_1 & (\gamma_{n-1})_2 & \ldots
		& (\gamma_{n-1})_{n-1}
		\end{bmatrix} = \prod_{0\leq i<j < n}(\gamma_j-\gamma_i) \neq 0.
		\end{equation*} In other words, the vectors $ \Gamma_k=((\gamma_k)_0,\dots,(\gamma_k)_{n-1}), k=0,1,\dots n-1 $ form a basis of $ \mathbb{R}^n $. Therefore for a fixed $ k, 0\leq k <n $, there exist $ r_0,\dots,r_{n-1}\in\mathbb{R} $, such that \begin{equation*}
		(0,\dots,0,1,0,\dots,0)=\sum_{i=0}^{n-1}r_i\Gamma_i,
		\end{equation*}where the vector on the left of the above equation has all components, except for the $ k-$th,  equal to zero.
		Therefore, 
		\begin{align*}
		f_k(z) &= \sum_{j=0}^{n-1}f_j(z)\Big(\sum_{i=0}^{n-1}r_i(\gamma_i)_j\Big) \\
	&	= \sum_{i=0}^{n-1}r_i\sum_{j=0}^{n-1}f_j(z)(\gamma_i)_j.
		\end{align*} Hence, by the our assumptions, 
		\begin{equation*}
		|f_k(z)| \leq \sum_{i=0}^{n-1}|r_i|C_{\gamma_i}.
		\end{equation*}
	\end{proof}


	\section{Proofs of the Main Theorems}\label{Proofs}

In order to understand the behaviour of the operator $ T_{g,a} $ it will be useful to consider the operators $ \MT{g}{n}{k} $ defined for an analytic function $ g $ and natural numbers $ n,k $ such that $ 0 \leq k < n $, by the formula
\begin{equation*}
\MT{g}{n}{k}f=\III^{n}(f^{(k)}g^{(n-k)}).
\end{equation*}  


\subsection{Boundedness}
\subsection*{Proof of Theorem \ref{p=q}} The main step to prove the sufficiency part of Theorem \ref{p=q}, is the following proposition. 
\begin{prop} \label{Bloch}
	Let $ n,k\in\mathbb{Z}_+, k < n$, and $ g\in\cB $. Then the operator $ \MT{g}{n}{k}:H^p\to H^p $ is bounded.
\end{prop}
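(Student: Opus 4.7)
The plan is to exploit Proposition \ref{VariationPalley} twice, at two different orders, together with the standard Bloch derivative estimate. Since $h:=T_g^{n,k}f = \III^n(f^{(k)}g^{(n-k)})$ vanishes at the origin together with its first $n-1$ derivatives, the left inequality of Proposition \ref{VariationPalley} applied at order $n$ gives
\begin{equation*}
\|h\|_{H^p}^{p} \lesssim \int_{\TT} G_n^{p}(h)(\theta)\, dm(\theta).
\end{equation*}
Hence it suffices to control $G_n(h)$ pointwise by $G_k(f)$ times $\|g\|_\cB$, and then invoke the right inequality of Proposition \ref{VariationPalley} at order $k$ to close the estimate.

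The pointwise step is where the Bloch hypothesis enters, and the exponents align in a pleasant way. By construction $h^{(n)} = f^{(k)}g^{(n-k)}$, so
\begin{equation*}
G_n^{2}(h)(\theta) = \int_{0}^{1}|f^{(k)}(re^{i\theta})|^{2}|g^{(n-k)}(re^{i\theta})|^{2}(1-r)^{2n-1}\,dr.
\end{equation*}
Since $g\in\cB$ and $n-k\geq 1$, the standard higher-order Bloch estimate yields $|g^{(n-k)}(z)|\lesssim \|g\|_\cB (1-|z|)^{-(n-k)}$. Plugging this in, the weight becomes $(1-r)^{2n-1-2(n-k)} = (1-r)^{2k-1}$, which is exactly the weight defining $G_k$. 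Thus
\begin{equation*}
G_n^{2}(h)(\theta) \lesssim \|g\|_\cB^{2}\int_{0}^{1}|f^{(k)}(re^{i\theta})|^{2}(1-r)^{2k-1}\,dr = \|g\|_\cB^{2}\, G_k^{2}(f)(\theta).
\end{equation*}

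Taking $L^p(\TT)$-norms and applying the right inequality of Proposition \ref{VariationPalley} to $f$ at order $k$ yields
\begin{equation*}
\|h\|_{H^p}\ \lesssim\ \|G_n(h)\|_{L^p}\ \lesssim\ \|g\|_\cB\,\|G_k(f)\|_{L^p}\ \lesssim\ \|g\|_\cB\,\|f\|_{H^p},
\end{equation*}
which is the desired boundedness.

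The only subtlety worth flagging is the hypothesis $f^{(i)}(0)=0$ for $i<k$ in Proposition \ref{VariationPalley}; inspection of the proof shows that the right-hand inequality follows directly from Lemma \ref{EstimateForFefferman} and the Fefferman--Stein theorem and needs no vanishing assumption, so it applies to arbitrary $f\in H^p$. If one wished to be fastidious, one could split $f = P + \tilde f$ with $P$ the Taylor polynomial of degree $k-1$; then $f^{(k)}=\tilde f^{(k)}$ makes $T_g^{n,k}f = T_g^{n,k}\tilde f$, and $\|\tilde f\|_{H^p}\lesssim\|f\|_{H^p}$ by trivial coefficient estimates. Beyond this small bookkeeping item, the argument is completely routine; the real content is the matching of the weight exponents $2n-1$ and $2(n-k)$ that makes the Bloch bound on $g^{(n-k)}$ absorb the extra powers of $(1-r)$ into precisely the $G_k$-weight.
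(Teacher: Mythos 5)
Your argument is correct and is essentially the paper's own proof: both bound $G_n(T_g^{n,k}f)$ pointwise by $\|g\|_\cB\, G_k(f)$ via the Bloch estimate $|g^{(n-k)}(z)|\lesssim \|g\|_\cB(1-|z|)^{-(n-k)}$ (the weights $2n-1$ and $2(n-k)$ matching to give $2k-1$), and then invoke Proposition \ref{VariationPalley} at orders $n$ and $k$. Your handling of the vanishing assumption (subtracting the degree-$(k-1)$ Taylor polynomial, which the operator does not see) is a harmless variant of the paper's reduction to $f^{(i)}(0)=0$, $i<k$, followed by a density argument.
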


\begin{proof}
	Let $ f $ be an analytic function in an open set containing the closure of the unit disc. We can assume without loss of generality that $ f^{(i)}(0)=0, 0\leq i < k $ because it is readily checked that $ \MT{g}{n}{k} $ maps the set of polynomials in $ H^p $. Then we have that \begin{align*}
	G_n( \MT{g}{n}{k}f)(t) &= \Big( \int_{0}^{1}|f^{(k)}(re^{it})|^2|g^{(n-k)}(re^{it})|^2(1-r)^{2n-1}dr\Big)^{1/2}\\
	&\lesssim \norm g \norm_{\cB}\Big( \int_{0}^{1}|f^{(k)}(re^{it})|^2(1-r)^{2k-1}dr\Big)^{1/2} \\
	&\lesssim \norm g\norm_{\cB}  G_k(f)(t).
	\end{align*} The result follows immediately from Proposition \ref{VariationPalley}, and a density argument.
\end{proof}
To prove sufficiency in Theorem  \ref{p=q}, notice that \begin{equation}\label{Tgmakrinari}
T_g=\MT{g}{n}{0}+\sum_{k=1}^{n-1} {{n-1}\choose{k}} \MT{g}{n}{k}.
\end{equation} Therefore if $ g\in BMOA \subset \cB $, by Proposition \ref{Bloch} and the fact that $ T_g $ is bounded when $ g\in BMOA $ (see \cite{AlemanCima01}), $ \MT{g}{n}{0} $ is bounded as well. Hence $ T_{g,a} $ is bounded as well.

We suppose that $ T_{g,a}:H^p\to H^p $ is bounded. It suffices to prove that in that case $ g\in\cB $. this together with Proposition \ref{Bloch} and equation (\ref{Tgmakrinari}) imply that $ T_g $ is bounded, and therefore $ g\in BMOA $, because the result is known to hold for $ n=1 $ (see \cite{AlemanCima01}).

For $ \lambda\in\mathbb{D}, \gamma>1/p $ set \begin{equation*}
f_{\lambda,\gamma}(z)=\frac{(1-|\lambda|^2)^{\gamma-1/p}}{(1-\bar{\lambda}z)^\gamma}.
\end{equation*} Hence, 
\begin{equation}\label{TestFunctions}
f_{\lambda,\gamma}^{(k)}(\lambda)=\frac{(\gamma)_k\bar{\lambda}^k}{(1-|\lambda|^2)^{k+1/p}},\,\, k\in\mathbb{N},
\end{equation}  
where $ (\gamma)_0=1, (\gamma)_k=\gamma(\gamma+1)\cdots(\gamma+k-1) $. Also there exists a positive constant $ C_\gamma $ such that $ \norm f_{\lambda,\gamma}\norm_p \leq C_\gamma, \lambda\in\mathbb{D} $. Then the growth estimate for $ H^p $ functions gives \begin{align*}
\frac{C_\gamma}{(1-|\lambda|^2)^{n+1/p}} &\geq |(T_{g,a}f_{\lambda,\gamma})^{(n)}(\lambda)| \\
&=   \Big| \sum_{k=0}^{n-1}\frac{a_k\bar{\lambda}^k(\gamma)_k}{(1-|\lambda|^2)^{k+1/p}}g^{(n-k)}(\lambda) \Big|.
\end{align*} Or, rearranging the inequality, 
\begin{equation}\label{grammikablegmeno}
\Big| \sum_{k=0}^{n-1}g^{(n-k)}(\lambda)(1-|\lambda|^2)^{(n-k)}a_k\bar{\lambda}^k(\gamma)_k \Big| \leq C_\gamma.
\end{equation} By applying Lemma \ref{LinearAlg} we can infer that $ \sup_{\lambda\in\mathbb{D}}|g^{(n)}(\lambda)|(1-|\lambda|^2)^n < \infty $, i.e. $ g\in \cB $.

At this point we are able to give the promised simple proof of Proposition \ref{Cohn}. We shall need the next lemma which is of some interesting on its own right.

\begin{lem}
	If $ g $ is in $ BMOA $ the operator $ \MT{g}{n}{n-1} $ is a bounded linear operator from $ \cB $ to $ BMOA $.
\end{lem}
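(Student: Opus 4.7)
The plan is to use the Carleson-measure characterization of $BMOA$ at the level of the $n$-th derivative: for $h$ analytic in $\DD$ with $h^{(j)}(0)=0$ for all $0\leq j<n$,
\begin{equation*}
\norm h \norm_*^2 \eqsim \sup_{I\subset\TT}\frac{1}{|I|}\int_{S(I)}|h^{(n)}(z)|^2(1-|z|^2)^{2n-1}\,dA(z),
\end{equation*}
where $S(I)=\{re^{i\theta}:1-|I|\leq r<1,\,e^{i\theta}\in I\}$ is the Carleson box over the arc $I$. This is the classical Fefferman--Stein/Garsia characterization for $n=1$ and follows by a standard iteration (or from Proposition \ref{VariationPalley} reformulated in the spirit of the $G$-function on Möbius images) for general $n$.

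Applying this to $h:=\MT{g}{n}{n-1}f$, which vanishes to order $n$ at the origin and satisfies $h^{(n)}=f^{(n-1)}g'$, I reduce matters to controlling $|f^{(n-1)}|^2|g'|^2(1-|z|^2)^{2n-1}\,dA$ on Carleson boxes. The two key ingredients are: (i) the pointwise Bloch estimate $|f^{(n-1)}(z)|(1-|z|^2)^{n-1}\lesssim \norm f\norm_{\cB}$, obtained by iterating the defining inequality of the Bloch space (or from Cauchy's formula on the sub-disc $D(z,(1-|z|)/2)$, much as in Lemma \ref{EstimateForFefferman}); and (ii) the Carleson-measure characterization of $BMOA$ itself: $d\mu_g(z):=|g'(z)|^2(1-|z|^2)\,dA(z)$ is a Carleson measure on $\DD$ with Carleson norm comparable to $\norm g\norm_*^2$.

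Combining these, for every arc $I\subset\TT$,
\begin{align*}
\int_{S(I)}|h^{(n)}(z)|^2(1-|z|^2)^{2n-1}\,dA(z)
&= \int_{S(I)}|f^{(n-1)}(z)|^2|g'(z)|^2(1-|z|^2)^{2n-1}\,dA(z) \\
&\lesssim \norm f\norm_{\cB}^2 \int_{S(I)}|g'(z)|^2(1-|z|^2)\,dA(z) \\
&\lesssim \norm f\norm_{\cB}^2\,\norm g\norm_*^2\,|I|.
\end{align*}
Dividing by $|I|$, taking the supremum over $I$ and invoking the characterization above yields $\norm{\MT{g}{n}{n-1}f}_*\lesssim \norm f\norm_{\cB}\norm g\norm_*$, which is the claimed boundedness.

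I do not anticipate any serious obstacle: the argument is essentially a bookkeeping identity between the weight $(1-|z|^2)^{2n-1}$ from the derivative Carleson characterization and the two factors $(1-|z|^2)^{2(n-1)}$ and $(1-|z|^2)$ coming from the Bloch bound on $f^{(n-1)}$ and the $BMOA$ Carleson measure of $g$. The only mildly delicate point is the justification of the higher-order Carleson characterization of $BMOA$ for $h$; if one prefers to avoid quoting it, the same conclusion can be reached by iterating Hardy--Stein-type identities, or by testing $h$ against the Garsia norm using the invariant measure $(1-|\varphi_a(z)|^2)\,dA(z)$ in place of Carleson boxes.
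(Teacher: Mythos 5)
Your argument is correct (for $n\geq 2$) but follows a genuinely different route from the paper. The paper's proof is operator-theoretic and self-contained within its own machinery: setting $F=\MT{g}{n}{n-1}f$, it observes the factorization $T_{F,0}h=\III^{n}(f^{(n-1)}g'h)=\MT{f}{n}{1}T_gh$ for $h\in H^2$, deduces boundedness of $T_{F,0}$ on $H^2$ from Proposition \ref{Bloch} (since $f\in\cB$) and the boundedness of $T_g$ (since $g\in BMOA$), and then invokes the necessity part of Theorem \ref{p=q} to conclude $F\in BMOA$; membership in $BMOA$ is thus read off from boundedness of an associated operator, with no Carleson-measure computation. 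You instead prove the estimate directly: the $n$-th derivative Carleson-box characterization of $BMOA$, the iterated Bloch bound $|f^{(n-1)}(z)|(1-|z|^2)^{n-1}\lesssim\norm f\norm_\cB$, and the classical Carleson measure $|g'|^2(1-|z|^2)\,dA$ for $g\in BMOA$ combine, with the correct weight bookkeeping $2n-1=2(n-1)+1$, to give the explicit bilinear bound $\norm \MT{g}{n}{n-1}f\norm_*\lesssim\norm f\norm_\cB\norm g\norm_*$. What each buys: your route yields a quantitative norm estimate and is independent of Theorems \ref{p=q}--type results, at the cost of quoting (or re-proving) the higher-order derivative characterization of $BMOA$, which you rightly flag as the only point needing justification; the paper's route avoids that external input by recycling results it has already established. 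One caveat you share with the paper: both proofs tacitly require $n\geq 2$ --- your Bloch estimate degenerates for $n-1=0$ (a Bloch function need not be bounded), just as the paper's use of $\MT{f}{n}{1}$ requires $1<n$; and indeed for $n=1$ the statement would assert $T_g:\cB\to BMOA$ for every $g\in BMOA$, which needs a stronger, logarithmic condition on $g$. So this is a limitation of the statement itself rather than a defect of your argument.
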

\begin{proof} Let $ f\in \cB $ and set $ F=\MT{g}{n}{n-1}f $. Then for an arbitrary $ h\in H^2 $
	\begin{equation*}
	T_{F,0}h=\III^{n}(f^{(n-1)}g'h) 
	=\MT{f}{n}{1}T_gh.
	\end{equation*}But $ T_gh\in H^2 $ since $ g\in BMOA $ and by Proposition \ref{Bloch}, $ \MT{f}{n}{1} $ is bounded on $ H^2 $. Hence $ T_{F,0} $ is bounded on $ H^2 $ as well. By the necessity part of Theorem $ \ref{p=q} $ we get that $ F\in BMOA $.
\end{proof}

\begin{proof}[Proof of Proposition \ref{Cohn}]
	Let $ f\in H^p $, and define  $ F $  by \begin{equation*}
	F(z)=\Big(\int_{\mathbb{T}}\frac{\zeta+z}{\zeta-z}|f(\zeta)|^{p/2}dm(\zeta)\Big)^{2/p}.
	\end{equation*} Since $ F $ has positive real part, $ \log F\in BMOA $. Now we proceed by induction. The statement is trivial for $ n=0 $, hence suppose that it is true for $ n-1 $, \begin{align*}
	f^{(n)} &=(FG_{n-1}^{(n-1)})' \\
	&=F((\log F)'G_{n-1}^{(n-1)}+G_{n-1}^{(n)}) \\
	&=F(\MT{\log F}{n}{n-1}G_{n-1}+G_{n-1})^{(n)}.
	\end{align*} But $ \log F\in BMOA \subset \cB $, hence, by the previous lemma $\MT{\log F}{n}{n-1}G_{n-1} \in BMOA$.
\end{proof}

\subsection*{Proof of Theorem \ref{p<q}} As before we prove a seemingly stronger statement.
\begin{prop}
	Let $ 0<p<q<\infty, n\in\mathbb{N} $ and $ 0\leq k<n $ fixed. Then set $ \alpha=\frac{1}{p}-\frac{1}{q} $. If $ l<\alpha \leq l+1\leq n-k $ for some $ l\in\mathbb{N} $ and $ g^{(l)}\in \Lambda_{\alpha-l} $, then $ \MT{g}{n}{k} $ is bounded from $ H^p $ to $ H^q $.
\end{prop}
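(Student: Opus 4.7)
The plan is to follow the blueprint of Proposition~\ref{Bloch}. Set $F:=\MT{g}{n}{k}f$; since $F^{(i)}(0)=0$ for $0\le i<n$, Proposition~\ref{VariationPalley} gives
\begin{equation*}
\|F\|_{H^q}^q \lesssim \int_{\mathbb{T}} G_n(F)(\theta)^q\,dm(\theta),\quad G_n(F)(\theta)^2=\int_0^1|f^{(k)}(re^{i\theta})|^2|g^{(n-k)}(re^{i\theta})|^2(1-r)^{2n-1}\,dr.
\end{equation*}
The assumption $g^{(l)}\in\Lambda_{\alpha-l}$ combined with $n-k\ge l+1$ gives, by the standard derivative-growth characterization of analytic Lipschitz (and Zygmund, when $\alpha-l=1$) classes, the pointwise bound $|g^{(n-k)}(z)|\lesssim(1-|z|)^{\alpha-(n-k)}$. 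The endpoint case $\alpha-l=1$, $n-k=l+1$ must be dealt with separately by starting from $|g^{(l+2)}(z)|\lesssim(1-|z|)^{-1}$ and integrating once along a radius.

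Plugging this bound into the previous display yields
\begin{equation*}
G_n(F)(\theta)^2 \lesssim \int_0^1|f^{(k)}(re^{i\theta})|^2(1-r)^{2(\alpha+k)-1}\,dr,
\end{equation*}
so the proposition reduces to the weighted Paley--Littlewood estimate
\begin{equation*}
\Big\|\Big(\int_0^1|f^{(k)}(re^{i\theta})|^2(1-r)^{2(\alpha+k)-1}\,dr\Big)^{1/2}\Big\|_{L^q(dm)} \lesssim \|f\|_{H^p},\quad \alpha=\tfrac1p-\tfrac1q>0.
\end{equation*}
This is really a fractional Hardy--Sobolev embedding in disguise. I would prove it by mimicking the second half of the proof of Proposition~\ref{VariationPalley}: Lemma~\ref{EstimateForFefferman} and Fubini dominate the radial integral by the tent integral $\int_{\Gamma_{1/2}(e^{i\theta})}|f^{(k)}(z)|^2(1-|z|)^{2(\alpha+k)-2}\,dA(z)$, and then a Luecking-type $H^p\!\to\!L^q(d\mu)$ embedding with $d\mu(z)=(1-|z|)^{2(\alpha+k)-2}\,dA(z)$ finishes the job, the required Carleson condition on $\mu$ being immediate from $\alpha>0$.

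The main obstacle is this last embedding step. For $p,q\ge 1$ it is essentially classical, following from Minkowski's integral inequality and the Hardy--Littlewood inclusion of $H^p$ into a weighted Bergman space; for the full range $0<p<q<\infty$, however, one must fall back on tent-space duality or on the non-tangential maximal / atomic description of $H^p$, which is the only place where real-variable Hardy-space machinery genuinely enters the argument. Once this embedding is in hand, the proposition is a direct adaptation of the tools already developed in Section~\ref{Prel}.
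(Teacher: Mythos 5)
Your reduction to the weighted square--function estimate
\begin{equation*}
\Big\Vert \Big(\int_0^1|f^{(k)}(re^{i\theta})|^2(1-r)^{2(\alpha+k)-1}\,dr\Big)^{1/2}\Big\Vert_{L^q(dm)}\ \lesssim\ \Vert f\Vert_{H^p}
\end{equation*}
is sound as far as it goes: the pointwise bound $|g^{(n-k)}(z)|\lesssim(1-|z|)^{\alpha-(n-k)}$ does follow from $g^{(l)}\in\Lambda_{\alpha-l}$ together with $n-k\geq l+1$, with the Zygmund endpoint treated as you indicate. But this is not really a reduction: for $g^{(n-k)}$ constant the display \emph{is} the proposition, so all of the difficulty has been relocated into it, and the justification you offer for it does not work as stated. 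After Lemma \ref{EstimateForFefferman} and Fubini you are facing the $L^q$ norm of an area function, $\theta\mapsto\big(\int_{\Gamma_{1/2}(e^{i\theta})}|f'|^2(1-|z|)^{2\alpha}dA\big)^{1/2}$, i.e.\ a tent-space $T^q_2$ quantity. A Luecking--Duren embedding $\int_{\DD}|f|^q d\mu\lesssim\Vert f\Vert_{H^p}^q$ under a $q/p$-Carleson condition is a different statement and does not control this object (except when $q=2$); ``the required Carleson condition is immediate from $\alpha>0$'' is therefore not a proof. What is actually needed is the fractional-integration theorem on tent spaces (multiplication by $(1-|z|)^{\alpha}$ maps $T^p_2$ into $T^q_2$ when $\frac1q=\frac1p-\alpha$, Coifman--Meyer--Stein) or an equivalent Hardy--Sobolev type embedding. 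That statement is true, but it is precisely the hard analytic content of the proposition, and it is neither proved nor precisely cited; as written the argument has a genuine gap at its central step.

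For contrast, the paper avoids any new embedding theorem. It inducts on $n$: when $\alpha\leq n-k$, integration by parts gives $\MT{g}{n+1}{k}f=\MT{g}{n}{k}f-\MT{g}{n+1}{k+1}f$, and Cohn's factorization $f^{(k+1)}=FG^{(k+1)}$ with $F\in H^p$, $G\in BMOA$ turns the second term into $\MT{G}{n+1}{n-k}\MT{g}{n-k}{0}F$, which is handled by the induction hypothesis together with Theorem \ref{p=q}; in the remaining case $n-k<\alpha\leq n+1-k$ the identity $\MT{g}{n+1}{k}f=\III^{n-k}\MT{g}{k+1}{1}T_{g^{(n-k)}}F$ reduces everything to the known $n=1$ result for $T_{g^{(n-k)}}$, Proposition \ref{Bloch}, and the smoothing properties of $\III$. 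If you wish to keep your direct route you must either prove the displayed square-function embedding or import the tent-space fractional integration theorem with a precise reference; otherwise the factorization-and-induction argument is the more economical path.
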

\begin{proof}For $ n=1 $ the operator is just $ T_g $, and the statement of the Theorem reduces to the known result about $ T_g $ (see \cite{AlemanCima01}). Suppose now that the statement is true for some $ n\geq1 $ and proceed by induction. We distinguish two cases. If $ n-k<\alpha\leq n+1-k $, take an arbitrary $ f\in H^p $ and write $ f^{(k)}=FG^{(k)} $ for some $ F\in H^p $ and $ G\in BMOA $. The following identity is true, 
	\begin{equation*}
	\MT{g}{n+1}{k}f=\III^{(n-k)}\MT{g}{k+1}{1}T_{g^{(n-k)}}F.
	\end{equation*} 

By the assumption that $ g^{(n-k)}\in \Lambda_{\alpha-n+k} $ and by the case $ n=1 $ we conclude that $ T_{g^{(n-k)}}$ is  bounded from $H^p $ to $ H^{p'}, $ where $ \frac{1}{p'}=\frac{1}{q}+n-k $. It follows, using Proposition \ref{Bloch} and the boundedness properties of $ \III $ that in this case $ T_{g,a}:H^p\to H^q $ is bounded. Suppose now that $ \alpha \leq n-k $. Assuming without loss of generality that $ g $ has sufficient zero multiplicity at the origin. Integrating by parts we get
	\begin{equation*}
	\MT{g}{n+1}{k}f=\MT{g}{n}{k}f-\MT{g}{n+1}{k+1}f.
	\end{equation*}

The first term on the right hand side is bounded by the induction hypothesis. To prove the boundedness of the second term, factorize $ f^{(k+1)} $ as in Proposition \ref{Cohn}. Then,
	\begin{equation*}
	\MT{g}{n+1}{k+1}f=\MT{G}{n+1}{n-k}\MT{g}{n-k}{0}F.
	\end{equation*}  The result follows by the induction hypothesis and Theorem \ref{p=q} applied to the operator $ \MT{G}{n+1}{n-k} $.
\end{proof}

The proof of necessity in Theorem \ref{p<q} is similar to that of the necessity part of Theorem \ref{p=q}. Consider again the family of test functions $ f_{\lambda,\gamma} $. As before, \begin{equation*}
\Big| \sum_{k=0}^{n-1}\frac{a_k\bar{\lambda}^k(\gamma)_k}{(1-|\lambda|^2)^{k}}g^{(n-k)}(\lambda) \Big| \leq \frac{C_\gamma}{(1-|\lambda|^2)^{\frac{1}{q}-\frac{1}{p}+n}},
\end{equation*}  by applying again Lemma \ref{LinearAlg} we can separate the previous condition to the following.
\begin{equation}\label{estimatemea}
|g^{(n-k)}(\lambda)|\lesssim \frac{1}{(1-|\lambda|^2)^{\frac{1}{q}-\frac{1}{p}+n-k}},\,\, \lambda\in\mathbb{D},
\end{equation} where $ k=\max\{l:a_l\neq 0\} $. Since $ a_k\neq 0 $ it follows immediately that $ g^{(l)}\in \Lambda_{\alpha-l} $ if $ \alpha\leq n-k $ and $ g^{(n-k)}=0 $  if  $ \alpha>n-k $.

\subsection*{Proof Theorem \ref{p>q}}The proof of sufficiency in Theorem \ref{p>q} follows the same pattern. Again we will prove by induction on $ n $ that for $ 0\leq k <n, \MT{g}{n}{k}$ is bounded from $ H^p $ to $ H^q $. As usual write $ f^{(k)}=FG_k^{(k)} $  as in Proposition \ref{Cohn} and use  the same recursive formula, i.e.,
\begin{equation*}
\MT{g}{n+1}{k}f=\MT{g}{n}{k}f-\MT{G_k}{n+1}{n-k}\MT{g}{n-k}{0}F.
\end{equation*} which by the induction hypothesis and Theorem \ref{p=q} proves our claim.

The proof of necessity in Theorem \ref{p>q} is more involved and is based on the following lemma which can be proved for arbitrary $ a\in\mathbb{C} $.

\begin{lem} \label{ToLemma}
	For every $ q>0 $ there exists a positive constant $ C=C(q)>q $ such that if $p>C(q)$  and $ T_{g,a}:H^p \to H^q $ then $ g\in H^s, \frac{1}{s}=\frac{1}{q}-\frac{1}{p} $.
\end{lem}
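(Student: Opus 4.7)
The plan is to translate the boundedness $T_{g,a}\colon H^p\to H^q$ into a Carleson-type embedding for the measure $d\mu=|g^{(n)}(z)|^2(1-|z|)^{2n-1}\,dA(z)$, whose characterization for the range $p>q$ (a Luecking-type theorem on tent spaces) is equivalent to the Littlewood--Paley condition $G_n(g)\in L^s(\TT)$, i.e. $g\in H^s$ by Proposition \ref{VariationPalley}. The hypothesis $p>C(q)$ will enter only to absorb the lower-order terms in the expansion of $(T_{g,a}f)^{(n)}$.

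First I would test the operator on the family $f_{\lambda,\gamma}$ already used in the necessity parts of Theorems \ref{p=q} and \ref{p<q}. Applying the $H^q$-growth estimate to $(T_{g,a}f_{\lambda,\gamma})^{(n)}$ at $\lambda$, expanding via (\ref{TestFunctions}), and invoking Lemma \ref{LinearAlg} gives, for each $k$ with $a_k\neq 0$ (and automatically for $k=0$),
\begin{equation*}
|g^{(n-k)}(\lambda)|\;\lesssim\;(1-|\lambda|^2)^{-(n-k)-1/s},\qquad \lambda\in\DD.
\end{equation*}
Next, since $T_{g,a}f$ vanishes to order $n$ at the origin, Proposition \ref{VariationPalley} yields
\begin{equation*}
\|T_{g,a}f\|_{H^q}^q\;\eqsim\;\int_{\TT}\Big(\int_{0}^{1}\bigl|(T_{g,a}f)^{(n)}(re^{it})\bigr|^2(1-r)^{2n-1}\,dr\Big)^{q/2}dm(t),
\end{equation*}
and the integrand splits as $(T_{g,a}f)^{(n)}=fg^{(n)}+\sum_{k=1}^{n-1}a_k f^{(k)}g^{(n-k)}$. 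The key step is to show that, for $p$ large enough in terms of $q$, the lower-order terms can be absorbed. I would proceed by reorganising $\III^n[f^{(k)}g^{(n-k)}]$ through successive integrations by parts, exactly as in the identity $\MT{g}{n}{1}=\MT{g}{n-1}{0}-\MT{g}{n}{0}$ used implicitly in Section \ref{Proofs}, so that each mixed term is rewritten as a linear combination of principal-type terms $\MT{g}{j}{0}$; the pointwise estimates above then allow one to trade a power of $(1-|z|)^{1/p}$ for a fixed gain when $p$ is sufficiently large, reducing the problem to the boundedness of $\MT{g}{n}{0}\colon H^p\to H^q$.

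Once this reduction is in place, one obtains the embedding
\begin{equation*}
\int_{\TT}\Big(\int_{0}^{1}|f(re^{it})|^{2}|g^{(n)}(re^{it})|^{2}(1-r)^{2n-1}\,dr\Big)^{q/2}dm(t)\;\lesssim\;\|f\|_{H^p}^{q},\qquad f\in H^p,
\end{equation*}
which is precisely the statement that $d\mu=|g^{(n)}|^2(1-|z|)^{2n-1}dA$ is a $q$-Carleson measure for $H^p$. Invoking the Luecking-type characterization in the range $p>q$, this is equivalent to the nontangential balayage of $\mu$ lying in $L^{s/2}(\TT)$, which, once again by Proposition \ref{VariationPalley}, gives $g\in H^s$. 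The main obstacle I anticipate is the absorption step: the pointwise estimates on $g^{(n-k)}$ and on $f^{(k)}$ conspire to give exactly the same boundary growth for $|f^{(k)}g^{(n-k)}|$ and $|fg^{(n)}|$, so no soft pointwise domination can succeed. The proof must therefore use the integration-by-parts identities between the $\MT{g}{n}{k}$'s to rewrite the error in a form where the large value of $p$ produces a quantitative gain, which is the role played by the threshold $C(q)$.
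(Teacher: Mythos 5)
Your overall plan --- reduce matters to the tent-space embedding for $d\mu=|g^{(n)}|^2(1-|z|)^{2n-1}dA$ and quote a Luecking-type characterization --- breaks down exactly where you say it does, and that breakdown is not a technicality: the absorption of the mixed terms \emph{is} the content of the lemma, and your sketch supplies no mechanism for it. Testing on $f_{\lambda,\gamma}$ only yields the critical growth $|g^{(n-k)}(\lambda)|\lesssim(1-|\lambda|^2)^{-(n-k)-1/s}$, with no room to spare, so (as you concede) no pointwise trade of a factor $(1-|z|)^{1/p}$ can produce a gain, however large $p$ is. The integration-by-parts identities do let you rewrite $T_{g,a}$ as a fixed linear combination of the operators $\MT{g}{j}{0}$, $j\leq n$, but boundedness of the sum does not give boundedness of the single term $\MT{g}{n}{0}$: this is precisely the ``no cancellation'' difficulty the paper points out, and for general $n$ your claimed reduction would prove more than the paper itself does (it leaves that case as a conjecture; the lemma and the identity (\ref{analysitelesti}) it rests on are the $n=2$ case with scalar $a$). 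In addition, the Luecking-type necessity you invoke is itself a nontrivial theorem outside the paper's toolkit. So the proposal has a genuine gap at its key step, and the intended role of the threshold $C(q)$ is misidentified.

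The paper avoids any such reduction and gets $C(q)$ from a completely different source. Writing $T_{g,a}f=fg+(1-a)\III^2(f''g)+(a-2)\III(f'g)$, one tests on the single function $G=\mathcal{H}\big(|g|^\alpha/(1+\epsilon|g|)^\alpha\big)^\beta$. Positivity of the real part of the Herglotz transform gives $\norm \log G\norm_*\lesssim\beta$ and $|G|\geq\big(|g|/(1+\epsilon|g|)\big)^{\alpha\beta}$ on $\TT$; the error terms, rewritten as operators of type $\MT{\log G}{2}{0}$ and $T_{\log G}$ acting on $Gg$, have $H^q$-norms $\lesssim(\beta+\beta^2)\norm Gg\norm_q$ by the sufficiency part of Theorem \ref{p=q}, hence are absorbed into the main term $fg=Gg$ once $\beta$ is small; M.~Riesz's theorem, which needs $\beta p>1$ and is the actual source of the restriction $p>C(q)$, bounds $\norm G\norm_p$ by the corresponding integral of the truncated $|g|$; finally one chooses $\alpha$ with $(\alpha\beta+1)q=\alpha\beta p$ and lets $\epsilon\to0$ by Fatou to conclude $g\in H^s$. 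In short, largeness of $p$ is used to permit a small exponent $\beta$ in the test function (so the error terms become small multiples of the main term), not to gain powers of $(1-|z|)$, and no Carleson-measure characterization is needed.
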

\begin{proof}Assume that $ g(0)=g'(0)=0 $, multiple integrations by parts allow us to rewrite the formula expressing $ T_{g,a} $ as 
	\begin{align}\label{analysitelesti}
	T_{g,a}f &= fg+(1-a)\int\int f''g+(a-2)\int f'g \\
	 & = fg+(1-a)\int\int\big( \frac{f''}{f}-\frac{f'^2}{f^2}\big)gf+(1-a)\int\int\frac{f'^2}{f^2}gf+(a-2)\int\frac{f'}{f}gf, 
	\end{align}for any $ f\in H^p $, where we used the convention 
	\begin{equation*}
	\int f = \int_{0}^{z}f(t)dt.
	\end{equation*} Now let $ \epsilon>0 $ and $ G = \mathcal{H}\big(\frac{|g|^\alpha}{(1+\epsilon|g|)^\alpha}\big)^\beta$, where $ \alpha $ and $ \beta $ are positive constants to be specified later and $ \mathcal{H} $ denotes the Herglotz transform, i.e. 
	\begin{equation*}
	G(z)=\Big( \int_{\mathbb{T}} \frac{|g(\zeta)|^\alpha}{(1+\epsilon|g(\zeta)|)^\alpha} \frac{\zeta+z}{\zeta-z}  dm(\zeta) \Big)^\beta.
	\end{equation*}
	$ G $ has positive real part , hence $ \norm \log G\norm_* \lesssim \beta $ and also $ G\in H^\infty $. If we assume that $ \beta p>1 $ then by M. Riesz's theorem \cite[Theorem 4.1]{Duren00}
	\begin{align}
	\norm G \norm_p^p & = \int_{\mathbb{T}}\mathcal{H}\big(\frac{|g|^\alpha}{(1+\epsilon|g|)^\alpha}\big)^{\beta p} dm \\
	& \lesssim \int_{\mathbb{T}}\Big( \frac{|g|}{1+\epsilon|g|}  \Big)^{\alpha\beta p}dm. \label{estimate18}
	\end{align}
	The last estimate we need for $ G $ is that $ |G| \geq (\Rl G^{1/\beta})^\beta = \Big( \frac{|g|}{1+\epsilon|g|}  \Big)^{\alpha\beta }. $ Which gives \begin{equation} \label{estimate19}
	\norm Gg \norm_q^q = \int_{\mathbb{T}} |Gg|^qdm \geq \int_{\mathbb{T}} \Big( \frac{|g|}{1+\epsilon|g|} \Big)^{(\alpha\beta+1)q}.
	\end{equation} With these preliminaries we are going to estimate $ \norm T_{g,a}G \norm_q $.  First we estimate separately the last three terms in the right hand side of (\ref{analysitelesti}). 
	\begin{align}\label{protiektimisi}
	\Big\norm \int\int (\log G)''Gg \Big\norm_q &\leq C_1(q)\norm \log G \norm_* \norm Gg\norm_q \\ 
	&\leq C_2(q)\beta \norm Gg \norm_q,
	\end{align}by the sufficiency part of Theorem \ref{p=q}.
	\begin{align*}
	\Big\norm \int\int\frac{G'^2}{G^2}gG \Big\norm_q & = \Big\norm \int\int (\log G)' \Big( \int (\log G)'gG \Big)'\Big\norm_q \\
	&\leq C_3(q) \beta^2 \norm gG \norm_q.
	\end{align*}
	\begin{equation*}
	\Big\norm \int \frac{G'}{G}gG \Big\norm_q \leq C_4(q)\beta \norm gG \norm_q.
	\end{equation*} Then (\ref{analysitelesti}) and the boundedness of $ T_{g,a} $ give \begin{equation*}
	\norm Gg \norm_q(1-C_5(q)\beta-C_6(q)\beta^2) \lesssim \norm T_{g,a} \norm_{p,q}\norm G\norm_p.
	\end{equation*} Furthermore, if $ \beta C_5(q)+\beta^2C_6(q) <1 $ (or equivalently $ \beta<C(q)$ where $ C(q)$ is a continuous function of $C_5,C_6 $)
	we arrive at \begin{equation*}
	\norm Gg \norm_q \leq C_{q,\beta} \norm T_{g,a} \norm_{p,q} \norm G \norm_p,
	\end{equation*} which together with estimates (\ref{estimate18}) and (\ref{estimate19}) give 
	\begin{equation*}
	\Big( \int_{\mathbb{T}} \big(\frac{|g|}{1+\epsilon|g|}\big) ^{(\alpha\beta+1)q}   \Big)^{1/q} \leq \tilde{C}_{q,\beta} \norm T_{g,a} \norm_{p,q} \Big( \int_\mathbb{T} \big( \frac{|g|}{1+\epsilon|g|}\big)^{\alpha\beta p}  \Big)^{1/p}.
	\end{equation*} Now, choose $ \alpha $ such that $ (\alpha\beta+1)q=\alpha\beta p $ and note that in this case $ (\alpha\beta+1)q=\frac{1}{s} $. Hence 
	\begin{equation*}
	\Big( \int_{\mathbb{T}}\big( \frac{|g|}{1+\epsilon|g|} \big)^sdm \Big)^{1/s} \leq \tilde{C_{q,\beta}} \norm T_{g,a} \norm_{p,q}.
	\end{equation*} Fatou's lemma then gives $ g\in H^s $.
\end{proof}

We can now prove the necessity part in Theorem \ref{p>q}. Let $ T_{g,0}:H^p\to H^q, p>q $ be bounded. And set $ \frac{1}{s}=\frac{1}{q}-\frac{1}{p} $. Note that by the Riesz-Thorin Theorem (see for example \cite[Remark 2.2.5]{Zhu90}) we can choose the constant $ C=C(q) $ in the previous lemma to stay bounded if $ 0<\epsilon<q<1/\epsilon $ for some $ \epsilon>0 $. Therefore \begin{equation*}
C_0=\sup_{\frac{1}{s}\leq q\leq \frac{1}{s}+\frac{1}{p}}C(q) < \infty.
\end{equation*}Pick a natural number $ n $ such that $ np>C_0 $ and define $ p'=np $. Then if $ f_i\in H^{p'} $ \begin{equation*}
T_{g,0}(f_1f_2\cdots f_n)=P_{T_{g,0}(f_2f_3\cdots f_n),0}(f_1).
\end{equation*} Keeping $ f_2,f_3,...,f_n $ fixed and applying the previous lemma  to the operator $T_{T_{g,0}(f_2f_3\cdots f_n),0}$ we have that $ T_{g,0}(f_2f_3\cdots f_n)\in H^{q_1}, \frac{1}{q_1}=\frac{1}{q}-\frac{1}{p'} $. Continuing inductively we arrive at $ g\in H^{q_n}, \frac{1}{q_n}=\frac{1}{q}-\frac{n}{p'}=\frac{1}{s} $.

\subsection{Compactness}\label{Compactness}

Now, we are going to prove the compactness part of Theorem \ref{p=q}. The proof is similar to the one for boundedness, therefore,  first we prove the result corresponding to Proposition \ref{Bloch} for the operators $ \MT{g}{n}{k} $ when $ g $ is in the little Bloch space.

\begin{prop}\label{CompactnessBloch}
	Let $ 0<p<\infty $ and $ g\in\cB_0 $. Then for $ n>1, 1<k<n $  the operator $ \MT{g}{n}{k} $ is a compact operator from $ H^p $ to itself. If $ g\in VMOA, \MT{g}{n}{0} $ is compact from $ H^p $ to itself. 
\end{prop}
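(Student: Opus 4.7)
The plan is to adapt the proof of Proposition \ref{Bloch}, exploiting the fact that $g\in\cB_0$ means $(1-|z|)^{m}|g^{(m)}(z)|\to 0$ as $|z|\to 1^-$ for every $m\geq 1$. I would use the standard criterion that a bounded operator $A$ on $H^p$ is compact if and only if $\|Af_j\|_p\to 0$ whenever $(f_j)$ is a bounded sequence in $H^p$ that converges to zero uniformly on compact subsets of $\DD$.

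For the first statement, fix $1\leq k<n$, $g\in\cB_0$, $\epsilon>0$, and a bounded sequence $(f_j)\subset H^p$ converging to $0$ uniformly on compacta. Choose $\rho\in(0,1)$ such that $(1-r)^{n-k}|g^{(n-k)}(re^{it})|<\epsilon$ for $r\geq\rho$. Following the proof of Proposition \ref{Bloch}, split the integral defining $G_n(\MT{g}{n}{k}f_j)(t)^2$ at $r=\rho$. The outer piece $\int_\rho^1$ is estimated exactly as in Proposition \ref{Bloch} but with $\|g\|_\cB$ replaced by $\epsilon$, yielding a contribution bounded by $\epsilon^2 G_k(f_j)(t)^2$. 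The inner piece $\int_0^\rho$ is bounded by
\begin{equation*}
\Big(\sup_{|z|\leq\rho}|g^{(n-k)}(z)|\Big)^2\sup_{|z|\leq\rho}|f_j^{(k)}(z)|^2\int_0^\rho(1-r)^{2n-1}dr,
\end{equation*}
and the supremum $\sup_{|z|\leq\rho}|f_j^{(k)}(z)|$ tends to $0$ as $j\to\infty$ by Cauchy's formula applied on a slightly larger disc. Taking $L^p$ norms, using Proposition \ref{VariationPalley}, and using the $p$-subadditivity when $p<1$, one gets $\|\MT{g}{n}{k}f_j\|_p\lesssim o(1)+C\epsilon\|f_j\|_p$; letting $j\to\infty$ and then $\epsilon\to 0$ gives compactness.

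For the second statement, recall that $VMOA\subset\cB_0$, so by the first part $\MT{g}{n}{k}$ is compact for every $1\leq k<n$ when $g\in VMOA$. The companion compactness result for the classical operator states that $T_g$ is compact on $H^p$ for $g\in VMOA$ (see \cite{AlemanCima01}). Combining this with the decomposition (\ref{Tgmakrinari}) rewritten as
\begin{equation*}
\MT{g}{n}{0}=T_g-\sum_{k=1}^{n-1}\binom{n-1}{k}\MT{g}{n}{k},
\end{equation*}
expresses $\MT{g}{n}{0}$ as a linear combination of compact operators, hence itself compact.

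The main delicate point will be the inner-disc term: one must justify uniform convergence of $f_j^{(k)}$ on $\overline{D(0,\rho)}$ from the hypothesis that $f_j\to 0$ uniformly on compact subsets of $\DD$, which is a straightforward Cauchy-estimate argument applied on a disc of slightly larger radius. A secondary bookkeeping issue is the $L^p$ triangle inequality for $p<1$, handled by $|a+b|^p\leq |a|^p+|b|^p$; everything else is a direct adaptation of the boundedness proof.
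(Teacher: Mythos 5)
Your proof is correct, but it takes a genuinely different route from the paper. The paper's own argument is an approximation scheme: integrating by parts it writes $\MT{g}{n}{k}$ as a combination of operators $\III^{\,i}M_{g^{(i)}}$, observes that for polynomial symbols these are compact because $\III$ is compact on $H^p$, and then uses the density of polynomials in $\cB_0$ (respectively in $VMOA$) together with the operator-norm estimate $\norm \MT{g}{n}{k}-\MT{g_m}{n}{k}\norm_p\lesssim \norm g_m-g\norm_{\cB}$ (respectively $\norm\cdot\norm_*$), obtained from the boundedness results via the closed graph theorem, to exhibit $\MT{g}{n}{k}$ as a norm limit of compact operators; the $k=0$, $VMOA$ case is handled by the identical scheme in the $BMOA$ norm rather than by citing compactness of $T_g$. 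Your argument instead verifies compactness directly through the sequential criterion, splitting the Littlewood--Paley integral at a radius $\rho$ and using the little-oh decay of $(1-r)^{\,n-k}|g^{(n-k)}|$; for $k=0$ you fall back on the known compactness of $T_g$ for $VMOA$ symbols plus the decomposition (\ref{Tgmakrinari}). What each buys: the paper's route is shorter once the boundedness estimates are in place and needs no compactness criterion, while yours is more self-contained and quantitative for $1\le k<n$, avoiding both the density of polynomials and the open mapping theorem, at the price of invoking the $n=1$ compactness characterization for the $k=0$ case (a result the paper also treats as known, but only uses in the necessity direction). Three small points you should make explicit: (i) the fact that $g\in\cB_0$ forces $(1-|z|)^{m}|g^{(m)}(z)|\to 0$ for all $m\ge 1$ needs a one-line Cauchy-estimate justification on the disc $D(z,\tfrac{1-|z|}{2})$; (ii) Proposition \ref{VariationPalley} is stated under the normalization $f^{(i)}(0)=0$, $0\le i<k$, so when applying the upper bound to $G_k(f_j)$ either note that the upper estimate holds without that normalization or subtract the degree-$(k-1)$ Taylor polynomial of $f_j$, which is harmless since those coefficients tend to $0$; (iii) the sequential compactness criterion should be justified in the quasi-Banach range $0<p<1$ (the usual normal-families argument goes through since $H^p$ is a complete metric space and bounded sequences admit subsequences converging uniformly on compact subsets of $\DD$).
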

\begin{proof}
	By integration by parts one can write $ \MT{g}{n}{k} $ in the following form 
	\begin{equation*}
	\MT{g}{n}{k}f=c_1I^{(k)}M_{g^{(k)}}f+c_2I^{(k+1)}M_{g^{(k+1)}}f+ \dots +c_{n-k}\III^{n}M_{g^{(n)}}f. 
	\end{equation*}Where $ M_\phi $ is the multiplication operator defined by 
	\begin{equation*}
	M_\phi f(z)=f(z)\phi(z).
	\end{equation*} 
	If $ g $ is polynomial, $ M_{g^{(i)}}, $ for $ k\leq i \leq n $ is bounded on $ H^p $. It is a well known fact (see \cite{AlemanCima01}) that $ \III $ is compact on $ H^p $, hence if $ g $ is polynomial $ \MT{g}{n}{k} $ is a compact operator. Suppose now that $ g\in\cB_0 $. Then there exists a sequence of polynomials $ g_n $ converging in the Bloch norm to $ g $. Therefore by an application of the open mapping Theorem, 
	\begin{equation*}
	\norm \MT{g}{n}{k}-\MT{g_m}{n}{k} \norm_p \leq C_{p}\norm g_m-g \norm_\cB,
	\end{equation*}i.e. $ \MT{g}{n}{k} $ is compact as the norm limit of compact operators.
	
	The proof of the second part is identical, since $ VMOA $ is the closure of the set of polynomials in  $ BMOA $.
\end{proof}

By Proposition \ref{CompactnessBloch} and the fact that $ VMOA \subset \cB_0 $, it follows immediately that $ T_{g,a} $ is compact if $ g
\in VMOA $.

To show necessity it suffices to show that $ g\in\cB_0 $, because then by Proposition \ref{CompactnessBloch} and formula (\ref{Tgmakrinari}) it follows that $ T_g $ is compact and therefore $ g\in VMOA $. Let $ f_{\lambda,\gamma} $ be defined by (\ref{TestFunctions}). For fixed $ \gamma $, $ f_{\lambda,\gamma} $ converges to zero on compact sets as $ |\lambda |\to 1^- $. The compactness of $ T_{g,a} $ then implies that $ T_{g,a}f_{\lambda,\gamma}  $ converges strongly to $ 0 $.

Using the same estimates as in the proof of necessity of Theorem \ref{p=q}, we get the following inequality.
\begin{equation*}
\Big| \sum_{k=0}^{n-1}g^{(n-k)}(\lambda)(1-|\lambda|^2)^{(n-k)}a_k\bar{\lambda}^k(\gamma)_k \Big| \leq \norm T_{g,a}f_{\lambda,\gamma} \norm_p.
\end{equation*}Again, by applying Lemma \ref{LinearAlg} we can separate the above estimate.
\begin{equation*}
|g^{(n)}(\lambda)|(1-|\lambda|^2)^n \lesssim \sum_{i=1}^{n}c_i \norm T_{g,a}f_{\lambda,\gamma_i} \norm_p,
\end{equation*}for some positive constants $ c_i $. This last inequality gives the desired result, since the right part converges to zero as $ |\lambda|\to 1^- $.

\bibliography{RawFormatLibrary}

\begin{thebibliography}{10}

\bibitem{AlemanSiskakis95}
A.~Aleman and A.~Siskakis, ``An integral operator on ${H}^p $,'' {\em Complex
  Variables, Theory \& Application}, vol.~28, no.~2, pp.~149--158, 1995.

\bibitem{Pommerenke77}
C.~Pommerenke, ``Schlichte funktionen und analytische funktionen von
  beschr\"ankter mittlerer oszillation.,'' {\em Commentarii mathematici
  Helvetici}, vol.~52, pp.~591--602, 1977.

\bibitem{BerksonPorta78}
E.~Berkson and H.~Porta, ``Semigroups of analytic functions and composition
  operators.,'' {\em Michigan Math. J.}, vol.~25, no.~1, pp.~101--115, 1978.

\bibitem{Aleman07}
A.~Aleman, ``A class of integral operators on spaces of analytic functions,''
  {\em Poroceedings of the Winter School in Operator Theory and Complex
  Analysis, Univ. {M}alaga,}, pp.~3--30, 2007.
\newblock {M}\'alaga.

\bibitem{AlexandrovPeller96}
A.~B. Alexandrov and V.~V. Peller, ``Hankel operators and similarity to a
  contraction,'' {\em Internat. Math. Res. Notices}, no.~6, pp.~263--275, 1996.

\bibitem{AlemanCima01}
A.~Aleman and J.~A. Cima, ``An integral operator on $ {H}^p$ and {H}ardy's
  inequality,'' {\em Journal d'Analyse Math{\'e}matique}, vol.~85, no.~1,
  pp.~157--176, 2001.

\bibitem{AlemanPelaez12}
A.~Aleman and J.~A.~Pelaez, ``Spectra of integration operators and weighted
  square functions,'' vol.~61, no.~2, pp.~775--793, 2012.

\bibitem{Rat07}
J.~R\"atty\"a, ``Linear differential equations with solutions in hardy
  spaces,'' {\em Complex Variables and Elliptic Equations}, vol.~52, no.~9,
  pp.~785--795, 2007.

\bibitem{COHN2000308}
W.~Cohn and I.~Verbitsky, ``Factorization of tent spaces and hankel
  operators,'' {\em Journal of Functional Analysis}, vol.~175, no.~2, pp.~308
  -- 329, 2000.

\bibitem{FeffermanStein72}
C.~Fefferman and E.~M. Stein, ``${H}^p$ spaces of several variables,'' {\em
  Acta Math.}, vol.~129, pp.~137--193, 1972.

\bibitem{Duren00}
P.~L. Duren, {\em Theory of ${H}^p$ spaces}.
\newblock Courier Corporation, 2000.

\bibitem{Zhu90}
K.~Zhu, {\em Operator Theory in Function Spaces}.
\newblock Mathematical surveys and monographs, M. Dekker, 1990.

\end{thebibliography}
\bibliographystyle{ieeetr}
\end{document}